\definecolor{Red}{rgb}{1.,0.,0.}
\newcounter{smallarabics}
\newenvironment{arabicenumerate}
{\begin{list}{{\normalfont\textrm{(\arabic{smallarabics})}}}
  {\usecounter{smallarabics}\setlength{\itemindent}{0cm}
   \setlength{\leftmargin}{5ex}\setlength{\labelwidth}{4ex}
   \setlength{\topsep}{0.75\parsep}\setlength{\partopsep}{0ex}
   \setlength{\itemsep}{0ex}}}
{\end{list}}
\newcounter{smallroman}
\newcommand{\ben}{\begin{arabicenumerate}}  
\newcommand{\een}{\end{arabicenumerate}}
\newtheorem{theorem}{Theorem}[section]
\newtheorem{assumption}{Hypothesis}[section]
\newtheorem{proposition}[theorem]{Proposition}
\newtheorem{lemma}[theorem]{Lemma}
\theoremstyle{definition}
\newtheorem{definition}[theorem]{Definition}
\newtheorem{remark}[theorem]{Remark}
\newtheorem{example}[theorem]{Example}
\newcommand{\beq}{\begin{equation}}
\newcommand{\eeq}{\end{equation}}
\newcommand{\bea}{\begin{aligned}}
\newcommand{\eea}{\end{aligned}}
\newcommand{\bex}{\begin{example}}
\newcommand{\eex}{\end{example}}
\def\bel{\begin{lemma}}
\def\eel{\end{lemma}}
\def\bet{\begin{theoreme}}
\def\eet{\end{theoreme}}
\def\bed{\begin{definition}}
\def\eed{\end{definition}}
\def\ber{\begin{remark}}
\def\eer{\end{remark}}
\def\qeds{\qed}
\def\beproof{\noindent{\bf Proof.}\ }
\renewenvironment{proof}{\beproof}{\qed}
\renewcommand{\S}{$\mathsection$}
\renewcommand{\mod}{\,\mbox{ mod } \,}
\def\fantom{\\ &\phantom{=}\,}
\renewcommand{\leq}{\leqslant}
\renewcommand{\geq}{\geqslant}
\newcommand{\open}[1]{\mathopen{}\mathclose{\left]#1 \right[}}
\newcommand{\opens}[1]{\mathopen{}\mathclose{]#1 [}}
\newcommand{\clopen}[1]{\mathopen{}\mathclose{\left[#1 \right[}}
\newcommand{\opencl}[1]{\mathopen{}\mathclose{\left]#1 \right]}}
\newcommand{\norm}[1]{\left\|{#1}\right\|}
\newcommand{\module}[1]{\left|#1\right|}
\newcommand{\bra}{\langle}
\newcommand{\ket}{\rangle}
\def\st{{ \ |\  }}
\newcommand*{\defeq}{:=}
\def\rr{{\mathbb R}}
\def\cc{{\mathbb C}}
\def\cC{{{C}}}
\def\cD{\mathcal{D}}
\def\ham{\mathcal{X}}
\renewcommand{\i}{i}
\renewcommand{\Im}{\operatorname{Im}}
\renewcommand{\Re}{\operatorname{Re}}
\renewcommand{\sp}{\operatorname{sp}}
\DeclareMathOperator{\supp}{supp}
\DeclareMathOperator{\Ker}{Ker}
\DeclareMathOperator{\Ran}{Ran}
\DeclareMathOperator{\Dom}{Dom}
\DeclareMathOperator{\wf}{WF}
\newcommand{\one}{{I}}
\def\p{\partial}
\def\cf{\cC^\infty}
\def\zero{0}
\def\12{\frac{1}{2}}
\DeclareMathOperator{\elll}{ell}
\renewcommand{\hbar}{{}}
\newcommand{\ON}{\| u\|_{{-N}}}
\newcommand{\ONs}{ C \| u\|^2_{{-N}}}
\newcommand{\bTM}{\mkern1.5mu\overline{\mkern-1.5mu T}{}^* M}
\newcommand{\bSig}{\mkern0.5mu\overline{\mkern-0.5mu \Sigma}}
\newcommand{\pr}{{\rm pr}}
\renewcommand{\ham}{H}
\newcommand{\epsnu}{\nu}
\def\st{{ \ |\  }}
\begin{document}




\title[Long-time evolution of forced waves in the low viscosity regime]{Long-time evolution of forced waves \\ in the low viscosity regime}

\author{Nicolas Frantz}
\address{LAREMA, Université Angers, France}
\email{nicolas.frantz@univ-angers.fr}
\author{Micha{\l} \textsc{Wrochna}}
\address{Mathematical Institute, Universiteit Utrecht, The Netherlands \vspace{-0.3cm}} \address{Mathematics \& Data Science, Vrije Universiteit Brussel, Belgium}
 \email{{m.wrochna@uu.nl}}
\email{michal.wrochna@vub.be}
\keywords{microlocal analysis, internal waves, viscosity, radial estimates}

\begin{abstract} We consider a model for internal waves described by a zero order pseudo-differential Hamiltonian $P$ damped by a second order viscosity term $i \epsnu Q$. Under Morse--Smale or similar weaker global conditions on the classical dynamics, we describe qualitatively the long-time behavior of solutions of the corresponding  evolution equation with smooth forcing  in a small $\epsnu$ regime. We show that dissipation effects arise no earlier than at the $t\sim \epsnu^{-1/3-}$ time scale.
\end{abstract}

\maketitle

\section{Introduction and main result}

\subsection{Introduction} Colin de Verdière and Saint-Raymond  introduced in \cite{CS19} a model for internal waves in fluids in the presence of topography, governed by a $0$th order pseudodifferential operator $P=P^*$ with Morse--Smale dynamics on a closed surface $M$. As shown in \cite{CS19} using  Mourre theory methods, the model captures the formation of singular profiles (or \emph{attractors}) as $t\to +\infty$ for solutions of the equation with given smooth periodic forcing
\beq\label{eq:sve}
(i \p_t - P) u_0(t)=  f e^{-i \omega_0 t}.
\eeq
The subsequent work \cite{CdV} generalized the result to arbitrary dimension and weaker dynamical assumptions, and
  an alternative microlocal approach based on radial estimates  was proposed by Dyatlov--Zworski \cite{DZ} who also uncovered the role played by Lagrangian regularity.

A significant  drawback of this model is that it does not take viscosity into account. A more realistic version consists in adding an elliptic second order operator $Q$, typically a Laplace--Beltrami operator $-\Delta$, or $-\Delta+\one$, and considering the small $\epsnu$ behavior of  solutions of the viscous equation
\beq\label{eq:ve}
(i \p_t - P+i \epsnu Q) u_\epsnu(t)=  f e^{-i \omega_0 t}.
\eeq
The primary difficulty is that $Q$ is \emph{two orders greater} than $P$ so the spectral theory of the elliptic operator $P_\epsnu:=P-i \epsnu Q$ is vastly different from that of $P$, and the role of eigenvalues of $P_\epsnu$ in the description of the $\epsnu\to 0+$, $t\to +\infty$ behavior of \eqref{eq:ve} is unclear.

Insights into the relationship of  $P_\epsnu$ eigenvalues with resonances of $P$ are provided  by results of Galkowski--Zworski  \cite{Galkowski2022} in the $Q=-\Delta$ case, who showed convergence of eigenvalues of $P_\epsnu$ close to $0$ to resonances,  and  of Wang \cite{Wang2022} who proved linear convergence rate a generic absence of embedded eigenvalues result; see also \cite{Almonacid2024} for a numerical study. 

In the present paper we focus instead on the large $t$ behavior in the low viscosity regime. The interplay between the two limits is expected to lead to different behavior of the system depending on the relative scale, and our main  objective is to identify a regime in which viscosity contributes significantly but the induced damping effects are not overwhelming.

\subsection{Setting  and main result} Before formulating our results let us introduce the notation and main assumptions. Note that without loss of generality we can assume that $\omega_0=0$.

Let $M$ be a compact manifold (without boundary) of dimension $n\geq 2$. We denote by $S^m(T^*M)$ the usual symbol class and by $S^m_{\rm h}(T^*M)$ the subclass of \emph{homogeneous} ones. We use the standard notation $\Psi^m(M)$ for pseudo-differential operators of order $m\in\rr$, see e.g.~\cite{DZbook} for a textbook introduction.

If $d\mu$ is a smooth density on $M$, we denote by $\bra \cdot,\cdot\ket$ the $L^2(M,d\mu)$ scalar product and $\| \cdot\|$ the norm.

We consider a pseudo-differential operator $P\in\Psi^0(M)$ and following \cite{CdV,DZ} we consider the following setup.

\begin{assumption}\label{hyp:main} We assume:
\ben
\item $P^*=P$ in $L^2(M,d\mu)$ for some smooth density $d\mu$;
\item the principal symbol of $P$, denoted in what follows by $p$, belongs to $S^0_{\rm h}(T^*M)$;
\item\label{a:regular} $0$ is a regular value of $p$, i.e.~$dp\neq 0$ on $p^{-1}(0)$.
\een
\end{assumption}

Assumption \eqref{a:regular} ensures that  $\Sigma_\omega:=p^{-1}(\omega)$ is a smooth conic submanifold of $T^*M\setminus \zero$ for all $|\omega|\leq \delta$ with $\delta >0$ small enough. We then make the same global non-trapping  assumption on  the Hamilton flow of $p$ as in \cite{CdV}, formulated in terms of weakly hyperbolic \emph{attractors} and \emph{repulsors} at infinite frequencies (see Definition \ref{defat}).

\begin{assumption}\label{hyp:main2} We assume that $P$ has \emph{simple structure}, i.e.~there are weakly hyperbolic attractors/repulsors $L_0^\pm$  such that forward Hamilton trajectories in $\Sigma_0$ tend to $L_0^+$ and backward trajectories tend to  $L_0^-$ (in the sense of Definitions \ref{def:basin} and \ref{def:simple}).
\end{assumption}

As discussed in \cite{CdV}, a special case is the Morse--Smale setting considered in \cite{CS19} and studied at length in \cite{DZ}. The general assumption is also  closely related to the setting of  sources and sinks which suffices for the estimates in the work of Dyatlov--Zworski \cite{DZ}; see Remark \ref{rmk:ss}.

The viscosity is modelled by an invertible operator
\beq\label{eq:Q}
Q\in\Psi^2(M) \mbox{ elliptic, s.t. } Q\geq 0.
\eeq
We abbreviate $L^2(M)=L^2(M,d\mu)$ and for the sake of simplicity we use the norm $\| u \|_s= \|Q^{s/2} u \|$ on the Sobolev space $H^s(M)$. Throughout the paper we write $u\in H^{s-}(M)$ to mean $u\in H^{s-\alpha}(M)$ for small enough $\alpha>0$ (where $\alpha$ can vary from line to line) and the same principle applies to $\epsnu^{-1/3-}$ and similar notation.

Our main result can be summarized as follows. 

\begin{theorem}[{cf.~Theorem \ref{thm:main} and Proposition \ref{prop:main}}]
Assume Hypotheses \ref{hyp:main}--\ref{hyp:main2} and $0\notin \sp_{\rm pp}(P)$. Then for any $f\in C^\infty(M)$, the solution  of  \eqref{eq:ve} with $u_\epsnu(0)=0$ decomposes as  \beq\label{eq:decomp0}
	u_\epsnu(t)=u_{\epsnu,\infty}+b_\epsnu(t)+e_{\epsnu}(t),	
	\eeq
	where $u_{\epsnu,\infty}= -P_\epsnu^{-1} f\to -(P-i 0)^{-1}f$ in $H^{-\frac{1}{2}-}(M)$ as $\epsnu\to 0+$, $\| b_\epsnu(t)\|\leq C\|f\|_1$ uniformly in   $t> 0$, $\epsnu> 0$, and for all $\delta_1>0$ there exists $\delta_2>0$ such that
\beq\label{eq:decomp2}
 \quad \|e_{\epsnu}(t)\|_{-1/2-}\leq  C t^{-\delta_2}  \| f\|, 
\eeq
	uniformly for  $t \sim \nu^{-\frac{1}{3}-\delta_1}$.  Furthermore,   if in addition $f\in \Ran \one_{[-\delta,\delta]}(P)$ for $\delta>0$ small enough, then for $t\in \opens{\epsnu^{-\frac{1}{3}-},\infty}$,  $u_\epsnu(t)\to  u_0(t)$ uniformly in $H^{-\12-}(M)$ as $\epsnu\to 0+$.
\end{theorem}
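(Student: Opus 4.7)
The plan is to start from the Duhamel representation of the unique solution with vanishing initial data. Since $Q$ is elliptic, positive and invertible, $P_\epsnu := P - i\epsnu Q$ is maximal dissipative, hence boundedly invertible on $L^2(M)$, and a direct verification gives
$$u_\epsnu(t) = -P_\epsnu^{-1} f + P_\epsnu^{-1} e^{-it P_\epsnu} f.$$
I would set $u_{\epsnu,\infty} := -P_\epsnu^{-1} f$ and reduce the problem to decomposing the residual $v_\epsnu(t) := P_\epsnu^{-1} e^{-it P_\epsnu} f$ into a $t$-uniformly bounded piece and an algebraically decaying piece on the prescribed time window. The convergence $u_{\epsnu,\infty} \to -(P-i0)^{-1} f$ in $H^{-\frac{1}{2}-}(M)$ is a limiting absorption statement at $\omega = 0$; I would establish it by transferring the radial and propagation-of-singularities estimates of \cite{DZ} to the family $P_\epsnu$, using that the viscous term $i\epsnu Q$ enters the positive-commutator computations with a favorable sign so that the threshold radial estimates at $L_0^\pm$ hold with constants uniform in $\epsnu \in \clopen{0,\epsnu_0}$. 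This also yields the $\epsnu$-uniform resolvent bounds needed in later steps.

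For the residual $v_\epsnu(t)$, I would introduce a microlocal partition $\chi_- + \chi_0 + \chi_+ = \Id$ with $\chi_\pm \in \Psi^0(M)$ microlocalized in small conic neighborhoods of $L_0^\pm$ and $\chi_0$ supported away from them. For the source/sink pieces, the dissipative variant of the radial estimates, combined with a Lagrangian regularity analysis, yields $\epsnu$-uniform $L^2$ bounds on $P_\epsnu^{-1} e^{-itP_\epsnu}\chi_\pm f$ that are independent of $t > 0$; these contributions assemble into $b_\epsnu(t)$ with $\|b_\epsnu(t)\| \leq C\|f\|_1$. For the central piece $\chi_0 f$, Assumption \ref{hyp:main2} ensures that forward Hamilton trajectories enter an arbitrarily small neighborhood of $L_0^+$ in controlled time, and an Egorov-style argument combined with a positive-commutator estimate produces algebraic decay $\|P_\epsnu^{-1} e^{-itP_\epsnu}\chi_0 f\|_{-\frac{1}{2}-} \leq Ct^{-\delta_2}\|f\|$ on the window $t \sim \epsnu^{-\frac{1}{3}-\delta_1}$, giving $e_\epsnu(t)$.

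For the last assertion, I would write $u_\epsnu(t) - u_0(t)$ as the sum of the stationary discrepancy $-(P_\epsnu^{-1} - (P-i0)^{-1})f$ and the propagator difference, which I would treat via Duhamel:
$$e^{-itP_\epsnu} - e^{-itP} = -\epsnu \int_0^t e^{-i(t-s)P_\epsnu} Q e^{-isP}\,ds.$$
The spectral cutoff $f \in \Ran\,\one_{[-\delta,\delta]}(P)$ localizes $e^{-isP} f$ microlocally near $\Sigma_0$ up to smoothing errors, so $\|Q e^{-isP} f\|_{-\frac{1}{2}-}$ can be controlled through the radial estimates, and the viscous correction is of size $\epsnu t$ times a slowly growing factor. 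Combining this with the decay of $e_\epsnu(t)$ and the first-paragraph convergence produces the uniform $H^{-\frac{1}{2}-}$ convergence $u_\epsnu(t) \to u_0(t)$ on $\opens{\epsnu^{-\frac{1}{3}-},\infty}$.

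The main obstacle, and the origin of the exponent $\frac{1}{3}$, is the quantitative balance underlying the second paragraph: the inviscid propagator focuses singularities on $L_0^+$ at a rate governed by the weakly hyperbolic linearization, so effective frequencies along the flow grow polynomially in $t$, while the viscous damping $\exp(-\epsnu\int_0^t|\xi(s)|^2\,ds)$ takes over when $\epsnu$ times the integrated square frequency is $O(1)$; for linearly growing frequencies this happens precisely at $\epsnu t^3 \sim 1$. Turning this heuristic into a rigorous bound requires $\epsnu$-uniform radial estimates that track the commutator error introduced by $\epsnu Q$ at subleading order --- particularly at frequencies $|\xi|^2 \sim \epsnu^{-1}$, where the principal character of $P_\epsnu$ changes --- and this is where the most delicate technical work will reside.
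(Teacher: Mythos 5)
Your reduction to $u_\epsnu(t) = -P_\epsnu^{-1}f + P_\epsnu^{-1}e^{-itP_\epsnu}f$ is correct and matches the paper, and your closing heuristic about $\epsnu t^3\sim 1$ identifies the right scale. However, the central mechanism you propose --- a microlocal partition $\chi_-+\chi_0+\chi_+$ near $L_0^\pm$ followed by ``dissipative radial estimates plus Lagrangian regularity'' applied directly to $P_\epsnu^{-1}e^{-itP_\epsnu}\chi_\pm f$ --- has a genuine gap. Radial estimates, whether inviscid or the $\epsnu$-uniform versions developed here, are resolvent estimates; they do not transfer to the propagator $e^{-itP_\epsnu}$ without an intermediary. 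Moreover a microlocal cutoff near $L_0^\pm$ is not preserved by the flow, and does not commute with either $e^{-itP}$ or $e^{-itP_\epsnu}$, so the partition does not stabilize the three pieces in $t$. The paper instead represents $e^{-itP_\epsnu}$ as a contour integral of the resolvent, inserts a \emph{spectral} cutoff $\varphi(P)$ supported near $\omega=0$ (not a microlocal one near the radial sets), and then integrates by parts $n$ times on the near-axis piece: each power of $s^{-1}$ gained costs one factor $(P_\epsnu-\lambda)^{-1}=O(\epsnu^{-1/3})$ in $B(H^{-1/2-})$, which is exactly where the $t\sim\epsnu^{-1/3-}$ window arises. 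This contour/integration-by-parts step is the bridge your argument is missing.

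Your Duhamel expansion $e^{-itP_\epsnu}-e^{-itP}=-\epsnu\int_0^t e^{-i(t-s)P_\epsnu}Q e^{-isP}\,ds$ for the last assertion is also delicate: $\|Qe^{-isP}f\|_{-1/2-}$ is essentially $\|e^{-isP}f\|_{3/2-}$, which grows polynomially in $s$ precisely because the attractor forms, and the operator $e^{-i(t-s)P_\epsnu}$ is not uniformly bounded on $H^{-1/2-}$ independently of $\epsnu$ and $t-s$. Whether the resulting $\epsnu\,t\cdot(\text{growth factor})$ stays small on the window $t\gtrsim\epsnu^{-1/3-}$ is not at all clear without a quantitative Sobolev growth bound on $e^{-isP}$, which you have not supplied. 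The paper avoids this altogether: under $f\in\Ran\one_{[-\delta,\delta]}(P)$ the term $e^{-itP_\epsnu}P_\epsnu^{-1}(\one-\varphi(P))f$ vanishes, and one simply observes that the surviving pieces of $b_\epsnu(t)$ decay (from the exponential bound on $\Gamma_\pm$ and the integration-by-parts bound on $I_{0,1-\chi}$) and that $e_\epsnu(t)$ decays by the same mechanism as before, so that $u_\epsnu(t)-u_{0,\infty}\to0$ directly.
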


Thus  the dissipation effects have to arise at the  $t\sim \epsnu^{-1/3-}$ scale  as predicted by various heuristics\footnote{We thank Yves Colin de Verdière, Charlotte Dietze, Laure Saint-Raymond and Thierry Gallay for sharing their insights on this problem.}.

\subsection{Bibliographical remarks} We mention the  physics literature only very briefly, featuring in particular the work of Maas et al.~\cite{maas}; the importance of taking into account viscosity is stressed  by \cite{ogilvie,Rieutord2010}.  

The spectral and scattering theory of $P$ was studied recently in various settings closely related to ours; see \cite{wangreview} for a concise review. On top of the already mentioned results from \cite{CS19,DZ,CdV,Galkowski2022,Wang2022},  Wang  \cite{Wang2023}  showed that the scattering matrix of $P$ is a Fourier Integral Operator. Christianson--Wang--Wang \cite{Christianson2024} introduced control estimates and proved the disappearance of singular patterns in the presence of damping. Spectral theory of $0$th order operators on $\mathbb{S}^1$ was considered by Tao \cite{Tao2024} who gave in particular an example of embedded eigenvalue.

We note that the assumption that $M$ is compact without boundary is a significant simplification; more realistic  models are studied by Dyatlov--Li--Wang, \cite{Dyatlov2025}, Li \cite{Li2023,Li2024} and Li--Wang--Wunsch \cite{Li2024a}. Naturally, this prompts the question of whether our results can be generalized to  settings with boundary.

\subsection{Structure of proofs}

The main idea here is to combine G\'erard's approach to Mourre theory \cite{gerard} with microlocal estimates near radial sets in the spirit of the works of Melrose \cite{melrose}, Vasy \cite{Vasy2013} and Dyatlov--Zworski \cite{DZ16,DZbook}. This allows us to prove radial estimates for $P-i\epsnu Q$ directly and deal with the viscosity term by an iterated correction of the corresponding term in the positive commutator estimates. This is the content of Section \ref{s:radial} where in particular we deduce a uniform bound $(P-\omega - i \epsnu Q)^{-1}=O(\epsnu^{-1/3})$  in  $B(H^{-\12-}(M))$ for small  $\module{\omega}$, suitable for composition purposes later on. 

In Section \ref{s:spectral} we discuss general properties of $(P_\epsnu-\omega)^{-1}=(P-\omega - i \epsnu Q)^{-1}$ that do not make use of dynamical assumptions. Ideally we would like to have a good control of $(P_\epsnu-\omega)^{-N}\psi(P)$ as $\epsnu\to 0+$ for a suitable spectral cutoff $\psi$. Possible $H^1(M)$ eigenvalues of $P$ prevent us from getting that directly for large $N$, but we show a related result which is equally useful in the context of contour integrals. 

These results are combined in Section \ref{s:final} to prove the main theorem  by expressing $u_\epsnu(t)$ in terms of powers of the  resolvent (through a contour integration formula for the semi-group). At this point the main difficulty is that there is no obvious  way of  introducing spectral cutoffs consistently for $\epsnu>0$ and $\epsnu=0$, so the  decomposition requires special care. 

The necessary background and preliminary results on pseudo-differential calculus are briefly introduced in Appendix \ref{app:A}.

\section{Radial estimates and zero viscosity limit} \label{s:radial}

\subsection{Microlocal positive commutator estimates}  

If $A\in \Psi^m(M)$ we denote by $\sigma_\pr(A)$ its principal symbol. The microsupport of $A$ (or primed wavefront set in the sense of pseudo-differential calculus) is denoted by $\wf'(A)$, and the elliptic set by $\elll(A)$. Recall that $\wf'(A)$ is closed and $\elll(A)$ is open. We will often use well-known variants of the elliptic estimate and of the sharp G\r{a}rding inequality, which are recalled in Appendix \ref{ss:basic}. 

In this section we generalize the setting slightly by allowing the viscosity term to be of arbitrary order $\ell\geq 0$ (this allows in particular to include the case $Q=\one$ for the sake of comparison), thus \eqref{eq:Q} is replaced by
\beq\label{eq:Q2}
Q\in\Psi^\ell(M) \mbox{ is elliptic, s.t. } Q> 0.
\eeq
Note that this implies that the operator $P-\i \epsnu Q$ is elliptic for $\epsnu\neq 0$.

We start with a lemma that summarizes the positive commutator method in a pseudo-differential setting, where the complex absorption term $Q$ is not assumed to be of lower order nor to have special commutativity properties. This motivates a careful preparation of the commutant in step \textbf{1.}~of the proof.

\begin{lemma}\label{lem:main} Assume Hypothesis \ref{hyp:main}, and let $Q$ be as  in \eqref{eq:Q}. Let $B\in\Psi^0(M)$. Suppose that there exists $m,s\in\rr$, $G_1\in\Psi^{m-s}(M)$ and $G_2\in\Psi^{s}(M)$ such that: 
\beq\label{cond1}
 \pm G_1 G_2\geq 0 \mbox{ on } \cf(M),
\eeq
$[P, i G_1 G_2]\in\Psi^{2s}(M)$, and
\beq\label{cond2}
\sigma_\pr\big([P, \i G_1 G_2] -G_2^* G_2\big)\geq 0 \mbox{ on } T^*M\setminus\elll(B).
\eeq
Let $B_1\in\Psi^0(M)$ be such that $\wf'(G_2)\subset\elll(B_1)$. Then for all $N$\footnote{We omit the dependence on $N$ and other Sobolev orders of the positive constants $C$.} and $u\in\cf(M)$,
\beq\label{eq:lem1}
\|  G_2 u \| \leq  C( \|B u\|_{{s}}  +   \| (P-\omega\pm \i\epsnu Q)u \|_{{m-s}}  +  \| B_1  u \|_{{s-1/2}} + \ON)
\eeq
uniformly in $\epsnu\geq 0$ and $\omega\in\rr$.
\end{lemma}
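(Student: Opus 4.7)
The plan is to apply the positive commutator method, the main novelty being the treatment of the viscosity term $\pm \i\epsnu Q$, whose order $\ell$ may exceed that of $P$.

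First, I would symmetrize by replacing $G_1 G_2$ with its self-adjoint part $T := \tfrac{1}{2}(G_1 G_2 + G_2^* G_1^*) \in \Psi^m(M)$, which inherits the three hypotheses: $\pm T \geq 0$, $[P, \i T] \in \Psi^{2s}(M)$, and $\sigma_\pr([P, \i T] - G_2^* G_2) \geq 0$ on $T^*M \setminus \elll(B)$ (since $\sigma_\pr([P, \i T]) = \Re\,\sigma_\pr([P, \i G_1 G_2])$ while $G_2^* G_2$ has real principal symbol $|g_2|^2$). Using $P^*=P$, $Q^*=Q$, $T^*=T$ and $\omega\in\rr$, a direct computation then yields the identity
\begin{equation*}
\langle \i[P, T]u, u\rangle = 2\Im\langle (P-\omega \pm \i\epsnu Q) u,\, Tu\rangle \,\mp\, 2\epsnu\,\Re\langle Qu,\, Tu\rangle.
\end{equation*}

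The aim is then to lower-bound the left-hand side and upper-bound the first right-hand side term. For the former, the symbol hypothesis gives a decomposition $[P, \i T] - G_2^* G_2 = D + R$ with $D \in \Psi^{2s}(M)$ of non-negative principal symbol everywhere (obtained by a non-negative extension across $\elll(B)$) and $R \in \Psi^{2s}(M)$ microsupported in $\elll(B)$; sharp G\r{a}rding applied to $D$ and a parametrix estimate for $R$ give
\begin{equation*}
\langle \i[P, T]u, u\rangle \geq \|G_2 u\|^2 - C\|Bu\|_s^2 - C\|u\|_{s-1/2}^2 - C\|u\|_{-N}^2.
\end{equation*}
For the first right-hand side term, decomposing $T = G_1 G_2 + R_T$ with $R_T \in \Psi^{m-1}(M)$ and applying Cauchy--Schwarz to $\langle G_1^*(P-\omega\pm\i\epsnu Q)u,\, G_2 u\rangle$ yields an upper bound $\epsilon\|G_2 u\|^2 + C_\epsilon\|(P-\omega\pm\i\epsnu Q)u\|_{m-s}^2$ plus an $R_T$-contribution absorbed into $\|B_1 u\|_{s-1/2}$ via microlocal elliptic regularity (using $\wf'(G_2) \subset \elll(B_1)$).

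The main obstacle is the viscosity term $\mp 2\epsnu\,\Re\langle Qu, Tu\rangle$. Because $\pm T \geq 0$ and $Q > 0$, the principal symbol of $\pm(QT + TQ)$ equals $\pm 2qt \geq 0$, so sharp G\r{a}rding delivers only a \emph{one-sided} bound $\pm 2\epsnu\,\Re\langle Qu, Tu\rangle \geq -C\epsnu\|u\|_{(m+\ell-1)/2}^2$; when $\ell > 0$ this residual is of Sobolev order strictly greater than $s-1/2$ and cannot be absorbed directly. The remedy (signalled in the introduction as an ``iterated correction of the corresponding term in the positive commutator estimates'') is to augment $T$ by a finite series $T \rightsquigarrow T + \epsnu T_1 + \epsnu^2 T_2 + \cdots$ with successively lower-order self-adjoint corrections $T_k$, chosen so that the extra commutator contributions $[P, \i\epsnu^k T_k]$ cancel the viscosity residual order by order in $\epsnu$, each step reducing the Sobolev order of the remainder. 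After finitely many iterations the leftover drops to order $\leq s-1/2$ and is absorbed into $\|B_1 u\|_{s-1/2}$. Assembling the three bounds and absorbing $\epsilon\|G_2 u\|^2$ into the left-hand side finally yields \eqref{eq:lem1} uniformly in $\epsnu \geq 0$ and $\omega \in \rr$.
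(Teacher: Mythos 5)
Your overall skeleton matches the paper: symmetrize the commutant, apply sharp G\r{a}rding to $[P,\i T]-G_2^*G_2$, Cauchy--Schwarz on $\langle (P-\omega\pm\i\epsnu Q)u, Tu\rangle$, and handle the viscosity term $\mp\epsnu\,\Re\langle Qu, Tu\rangle$ by an ``iterated correction.'' The gap is in the last step, which is precisely the new content of the lemma.

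Your proposed mechanism --- augmenting $T$ by $\epsnu T_1+\epsnu^2 T_2+\cdots$ so that the commutator contributions $[P,\i\epsnu^k T_k]$ cancel the viscosity residual ``order by order in $\epsnu$'' --- does not work, for two reasons. First, orders do not match: since $P\in\Psi^0(M)$, the bracket $[P,\i T_k]$ sits at order $\leq\mathrm{ord}(T_k)-(m-2s)$ (in the application $m-2s\geq 0$), whereas the leading error in $\Re\langle QT u,u\rangle$ lives at order $m+\ell-1$ with $\ell=2$, i.e.\ strictly higher than $[P,\i T]$ itself; no choice of lower-order $T_k$ lets a commutator reach that level. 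Second, and more fundamentally, the viscosity residual from sharp G\r{a}rding is a one-sided inequality $-C\epsnu\|u\|^2_{(m+\ell-1)/2}$, not an exact identity, so there is nothing to ``cancel'' against a commutator; and attaching a power $\epsnu^k$ to $T_k$ does not help since each such correction reintroduces a viscosity contribution $\mp 2\epsnu^{k+1}\Re\langle Qu, T_k u\rangle$ of the same problematic Sobolev order.

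What the paper actually does is $\epsnu$-independent and bypasses G\r{a}rding for the viscosity term altogether. One introduces the map $\Gamma(A)=\Re\bigl(Q^{1/2}AQ^{-1/2}\bigr)$, which acts as identity on principal symbols, and builds $G\in\Psi^m(M)$ by the asymptotic sum $G\sim\sum_{j\geq 0}(\one-\Gamma)^j(G_1 G_2)$. Then $\sigma_\pr(G)=\sigma_\pr(G_1G_2)$, $G^*=G$, $\wf'(G)\subset\wf'(G_1G_2)$, and crucially $\Gamma(G)=G_1G_2$ modulo $\Psi^{-\infty}(M)$, whence
\[
\pm\Re(GQ)=\pm Q^{1/2}\,\Gamma(G)\,Q^{1/2}=\pm Q^{1/2}\,G_1G_2\,Q^{1/2}\geq 0\quad\text{mod }\Psi^{-\infty}(M).
\]
This makes $\mp\epsnu\Re\langle GQu,u\rangle\leq C\|u\|_{-N}^2$ exactly, uniformly in $\epsnu\geq 0$, with no residual to push down in finitely many steps. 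The ``iterated correction'' is thus an iteration in Sobolev order built into the definition of $G$ (conjugating by $Q^{1/2}$ and correcting by $(\one-\Gamma)^j$), not an iteration in powers of $\epsnu$ and not a cancellation against commutator terms. With this $G$ in place of your $T$, the rest of your argument (G\r{a}rding on $[P,\i G]-G_2^*G_2$, Cauchy--Schwarz with the remainder $R=G-G_1G_2\in\Psi^{m-1}(M)$ absorbed into $\|B_1u\|_{s-1/2}$) goes through essentially as you wrote it.
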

\proof \textbf{1.} In the first step we will construct an operator $G\in\Psi^m(M)$ with the same principal symbol as $G_1 G_2$, but with different positivity properties. We start by defining
\[
\bea
\Gamma : \Psi(M)&\to \Psi(M) \\
   A &\mapsto  \Re(Q^\12 A Q^{-\12}),
\eea
\]
where we use the notation $\Re A\defeq (A+A^*)/2$. Then ${\one}-\Gamma: \Re\Psi^{p}(M)\to\Re\Psi^{p-1}(M)$ for all $p\in\rr$. We define $G\in\Psi^{m}(M)$ by the asymptotic sum
\[
G \sim \sum_{j=0}^{\infty} ({\one}-\Gamma)^j(G_1 G_2).
\]
Then $\sigma_{\rm pr}(G)=\sigma_{\rm pr}(G_1 G_2)$, $\wf'(G)\subset\wf'(G_1 G_2)$ and $G^*=G$. Furthermore, 
\[
\bea
\pm\Re GQ &= \pm Q^{\12} (\Re  Q^{-\12} G Q^{\12}) Q^{\12}=\pm Q^{\12} \Gamma(G) Q^{\12}\\
&=\pm Q^{\12}\big( G - ({\one}-\Gamma)(G)\big) Q^{\12} =  \pm Q^{\12} G_1 G_2 Q^{\12} \geq 0 \mod \Psi^{-\infty}(M),
\eea
\]
where in the last step we used \eqref{cond1}. Thus,
\beq\label{eq:posGQ}
\mp \Re \bra  G Q u,u\ket \leq  \ONs.
\eeq
\textbf{2.} Next, by \eqref{cond2} we have
\[
\sigma_\pr\big([P, \i G] -G_2^* G_2\big)\geq 0 \mbox{ on } T^*M\setminus\elll(B).
\]
We can apply the microlocalized sharp G\r{a}rding inequality (recalled in Proposition \ref{garding}) to the operators $[P, \i G]-G_2^* G_2$, $B$ and $B_1$. This yields:
\beq\label{eq:fromgarding}
\| G_2  u\|^2 \leq \bra  [P, \i G] u,u\ket + C \|B u\|^2_{{s}} + C \| B_1 u \|^2_{{s-\12}} + C \| u\|^2_{{-N}}. 
\eeq

\textbf{3.} We now undo the commutator:
\[
\bea
\frac{1}{2}\bra  [P, \i G]  u,  u \ket &= \frac{\bra G (P-\omega) u, u \ket -\bra (P-\omega) G u, u \ket}{2 i}\\
&=\frac{\bra   (P-\omega) u,  G u \ket - \bra  G u, (P-\omega)u \ket}{2 i}\\
&=\Im\bra (P-\omega)u,G u\ket = \Im\bra (P-\omega\pm i\epsnu Q)u,G u\ket \mp  \epsnu \Re \bra Q u,G u\ket \\
&=\Im\bra f_{\pm\epsnu},G u\ket \mp  \epsnu \Re \bra G Q u, u\ket,
\eea
\]
where we have denoted $f_{\pm\epsnu}= (P-\omega\pm i\epsnu Q)u$. By \eqref{eq:posGQ}, this implies that uniformly in $\epsnu\geq 0$ and $\omega\in\rr$, 
\beq\label{eq:tempZ}
\bra [P, \i G]  u,  u \ket\leq  C |\bra  f_{\pm\epsnu} ,G u\ket| + \ONs.
\eeq
Recall from step \textbf{1.}~that $G=G_1 G_2 + R$ for some $R\in\Psi^{m-1}(M)$. Using first the Cauchy--Schwarz inequality, we get for all $\epsilon>0$ 
\[
\bea
|\bra  f_{\pm\epsnu} ,G u\ket| &\leq \| f_{\pm\epsnu} \|_{{m-s}} \| G u \|_{{s-m}} \\
& \leq  \epsilon^{-1} \| f_{\pm\epsnu} \|_{{m-s}}^2 + \epsilon \| G u \|_{{s-m}}^2 \\
& \leq  \epsilon^{-1} \| f_{\pm\epsnu} \|_{{m-s}}^2 + \epsilon \| G_1 G_2 u\|_{{s-m}}^2 +  \epsilon \| R u \|^2_{{s-m}}\\
& \leq  \epsilon^{-1} \| f_{\pm\epsnu} \|_{{m-s}}^2 + C \epsilon \| G_2 u \|^2 +   C \epsilon \| B_1 u \|^2_{{s-1/2}}+\ONs,
\eea
\]
where in the last step we used that $\wf'(R)\subset \elll(B_1)$ and applied the elliptic estimate (Theorem \ref{elliptic}). Combining this with \eqref{eq:fromgarding} and  \eqref{eq:tempZ},  by fixing $\epsilon$ sufficiently small, relabelling the constants, and then estimating the square root we obtain \eqref{eq:lem1}.\qed

We now state a variant of Lemma \ref{lem:main} where we get better regularity at the cost of worse  behaviour in $\epsnu$.

\begin{lemma}\label{lem:main2} With the assumptions Lemma \ref{lem:main}, for any $r\in \open{s, \frac{m+\ell}{2}}$ and $A\in\Psi^0(M)$ such that $\wf'(A)\subset \elll (G_1 G_2)$ we have
\beq\label{eq:lem1v}
\|  A u \|_{r} \leq  C_\epsnu ( \|B u\|_{{s}}  +   \| (P-\omega\pm \i\epsnu Q)u \|_{{m-s}}  +  \| B_1  u \|_{{s-1/2}} + \ON),
\eeq
where  $
C_\epsnu = C \epsnu^{-(s-r)/(2s-m-\ell)}$.
\end{lemma}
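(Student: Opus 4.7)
The strategy is an interpolation argument between the $\epsnu$-uniform bound of Lemma~\ref{lem:main} at low regularity $s$ and an elliptic estimate for $P_\epsnu \defeq P - \omega \pm \i\epsnu Q$ at the symmetric high regularity $m+\ell-s$, the latter paying a factor $\epsnu^{-1}$ reflecting the $\epsnu$-degeneracy of the principal symbol.

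For the low end, the hypothesis $\wf'(A) \subset \elll(G_1 G_2) \subset \elll(G_2)$ combined with the elliptic estimate (Theorem~\ref{elliptic}) gives $\|Au\|_s \leq C\|G_2 u\| + \ON$; substituting~\eqref{eq:lem1} produces $\|Au\|_s \leq CR$, where $R$ abbreviates the right hand side of~\eqref{eq:lem1v} without the prefactor $C_\epsnu$. For the high end, one observes that for $\epsnu > 0$ the operator $P_\epsnu$ is elliptic of order $\ell$ with principal symbol $\mp\i\epsnu q$, where $q \defeq \sigma_\pr(Q) > 0$. A standard symbolic parametrix construction yields $E_\epsnu \in \Psi^{-\ell}(M)$ with seminorms of size $O(\epsnu^{-1})$ inverting $P_\epsnu$ modulo $\Psi^{-\infty}(M)$, uniformly in $\omega$ on bounded sets; composing with $A$ gives
$$\|Au\|_{m+\ell-s} \leq C \epsnu^{-1}\|P_\epsnu u\|_{m-s} + \ON \leq C\epsnu^{-1} R.$$

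To interpolate, since $Q > 0$ is self-adjoint, H\"older's inequality on its spectral measure yields the log-convexity
$$\|Au\|_r \leq \|Au\|_s^{1-\theta}\|Au\|_{m+\ell-s}^{\theta}, \qquad r = (1-\theta)s + \theta(m+\ell-s).$$
Solving gives $\theta = (s-r)/(2s-m-\ell)$, which lies in $\open{0,\12}$ precisely when $r \in \open{s,(m+\ell)/2}$; combining with the two previous displays produces $\|Au\|_r \leq CR^{1-\theta}(\epsnu^{-1}R)^{\theta} = C\epsnu^{-\theta} R$, exactly~\eqref{eq:lem1v}.

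The principal subtlety lies in the parametrix construction: the iterative correction to the leading symbol $(\mp\i\epsnu q)^{-1}$ involves the factor $\epsnu^{-1}(p-\omega)$, which is a priori $\epsnu^{-1}$-large, but each further iteration carries an additional $q^{-1}$ of strictly negative order, so the asymptotic expansion converges in $\Psi^{-\ell}(M)$ with overall $O(\epsnu^{-1})$ control and the dependence on $\omega$ is uniform on bounded sets.
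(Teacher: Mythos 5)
Your strategy (interpolate between the $\epsnu$-uniform bound at order $s$ from Lemma \ref{lem:main} and a high-end estimate paying $\epsnu^{-1}$) is genuinely different from the paper's, and the final exponent does come out right. The paper instead re-runs the positive-commutator argument of Lemma \ref{lem:main} but \emph{retains} the term $\mp\epsnu\Re\bra GQu,u\ket$ which was previously discarded by positivity; the approximate square root of $\pm G_1G_2 Q$ converts it into $\epsnu\|A_0 u\|^2_{(m+\ell)/2}$, yielding simultaneous control of $\|Au\|_s$ and $\epsnu^{1/2}\|Au\|_{(m+\ell)/2}$ by the right-hand side, and then interpolation between $s$ and $(m+\ell)/2$ gives $\epsnu^{-\theta/2}$ with $\theta/2 = (s-r)/(2s-m-\ell)$. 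That argument is self-contained and requires no separate resolvent bound.

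The gap in your version is the high-end estimate $\|Au\|_{m+\ell-s}\leq C\epsnu^{-1}\|P_\epsnu u\|_{m-s}+\ON$, and specifically the parametrix claim used to justify it. The iteration does not behave as you describe. Writing $e_0=(\mp i\epsnu q)^{-1}$, the first correction is $e_1\sim -e_0\cdot e_0(p-\omega)\in S^{-2\ell}$ with seminorms $O(\epsnu^{-2})$, and inductively $e_j\in S^{-(j+1)\ell}$ with seminorms $O(\epsnu^{-j-1})$. The extra factor $q^{-1}$ at each step improves the \emph{order} but not the $\epsnu$-dependence of the seminorms at any fixed symbolic order; a Borel sum of this sequence is not $O(\epsnu^{-1})$ in $S^{-\ell}$, and truncating at finite $J$ leaves a remainder $R^J_\epsnu\in\Psi^{-(J+1)\ell}(M)$ whose seminorms blow up like $\epsnu^{-J-2}$, which contaminates the $\ON$ term in your high-end estimate with $\epsnu$-divergent constants. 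So the claimed $B(H^{m-s},H^{m+\ell-s})$ bound with a clean $\epsnu^{-1}$ factor and an $\epsnu$-independent $\ON$ error does not follow. Note also that the only uniform-in-$\epsnu$ resolvent bound available from elementary means is the $B(H^{-\ell/2},H^{\ell/2})$ one (cf.\ Lemma \ref{lem:inv} for $\ell=2$), obtained by conjugation to $(Q^{-1/2}(P-\omega)Q^{-1/2}-i\epsnu)^{-1}$ which is self-adjoint; off that symmetric pair of indices the conjugated operator is no longer self-adjoint, its numerical range is not controlled, and the symbolic cross-terms $\epsnu\bra[Q^c,P]u,u\ket/i$ in the corresponding energy estimate have no sign. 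This is consistent with the lemma's natural ceiling $r<(m+\ell)/2$: the paper's high-end anchor is $(m+\ell)/2$, not $m+\ell-s$, and the fact that your argument would formally extend to $r<m+\ell-s$ is a symptom of the missing step.
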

\proof  We repeat the proof of  Lemma \ref{lem:main}, but this time we keep the term $\mp  \epsnu \Re \bra G Q u, u\ket$. In this way, we obtain the estimate  
\beq\label{var1}
\| G_2  u\| \pm \epsnu^{\12}  \Re \bra G Q u, u\ket \leq  C( \|B u\|_{{s}}  +   \| (P-\omega\pm \i\epsnu Q)u \|_{{m-s}}  +  \| B_1  u \|_{{s-1/2}} + \ON)
\eeq 
instead of  \eqref{eq:lem1}. On the other hand by $\eqref{eq:posGQ}$  and the standard approximate square root argument applied to  $G_1G_2$,
$$
\pm \Re \bra G Q u, u\ket = \| A_0 u \|_{(m+\ell)/2} - C \ON
$$ 
 for some $A_0\in\Psi^0(M)$ with $\elll(A_0)=\elll(G)$. Therefore, by using the elliptic estimate (Theorem \ref{elliptic}) twice, for any $A\in\Psi^0(M)$ with $\wf'(A)\subset \elll(G_2)$ (which implies  $\wf'(A)\subset \elll(A_0)$)   we obtain
 \beq\label{var2}
\|A u \|_s + \epsnu^{\12}  \| A u \|_{(m+\ell)/2}  \leq  \| G_2  u\| \pm \epsnu^{\12} \Re \bra G Q u, u\ket + C \ON.
\eeq
By interpolation in Sobolev spaces,
\beq\label{var3}
\|A u \|_r   \leq   C  \epsnu^{-(s-r)/(2s-m-\ell)} ( \|A u \|_s + \epsnu^{\12}  \| A u \|_{(m+\ell)/2}).
\eeq
The assertion follows by combining \eqref{var1}, \eqref{var2} and \eqref{var3}.
\qeds

\subsection{Propagation and radial estimates}

The Hamiltonian vector field of $p$ is denoted by $H_p$, and its flow by $\Phi_t$. 

As a first consequence of Lemma \ref{lem:main}, we can show that the Duistermaat--H\"ormander propagation of singularities theorem (presented for the sake of simplicity as an estimate for $u\in \cf(M)$) holds true in our setting.

\begin{proposition}\label{propagation} Let $s\in \rr$. If $A,B,\widetilde{B}\in \Psi^0(M)$ and for each $q\in \wf'(A)$ there exists $T\geq 0$ such that
\[
\Phi_{-T}(q)\in \elll(B), \mbox{ and } \Phi_{-t}(q)\in \elll(\widetilde{B}) \mbox{ for all  } t\in [0,T],
\]
then for all $N$ and $u\in\cf(M)$,
\beq\label{eq:propagation}
\|  A u \|_{s} \leq C( \|B u\|_{{s}}  +   \| \widetilde{B} (P-\omega- \i\epsnu Q)u  \|_{{s+1}}  + \ON)
\eeq
uniformly in $\epsnu\geq 0$ and $|\omega|\leq \delta$. 
\end{proposition}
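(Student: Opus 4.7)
The strategy is the standard positive commutator proof of propagation of singularities, with Lemma \ref{lem:main} supplying the core estimate that already accommodates the viscosity term. First I would reduce, by a partition of unity on $\wf'(A)$ and linearity, to the case where $\wf'(A)$ lies in a small conic neighborhood of a single point $q_0 \in T^*M\setminus 0$; the hypothesis then furnishes some $T\geq 0$ with $\Phi_{-T}(q_0)\in\elll(B)$ and $\Phi_{-t}(q_0)\in\elll(\widetilde B)$ for $t\in[0,T]$, and by openness of elliptic sets and continuity of $\Phi_t$ this persists on a small conic neighborhood $U$ of $q_0$ containing $\wf'(A)$.

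Next, on a tubular neighborhood $V$ of the flow segment $\{\Phi_{-t}(q):q\in U,\ t\in[-\varepsilon,T+\varepsilon]\}\subset \elll(\widetilde B)$, I would pick a smooth function $\phi$ with $H_p\phi=1$ and build an escape symbol $g_2\in S^s_{\rm h}$ whose square takes the form $g_2^2 = \chi(\phi)\eta|\xi|^{2s}$, where $\chi$ is smooth, nonnegative, supported in $\{\phi\geq -T-\varepsilon\}$ and strictly positive on $\{\phi\geq -T+\varepsilon\}$ (in particular at $\phi(q_0)=0$), and $\eta$ is a transverse cutoff equal to $1$ near the trajectory, arranged so that its transverse transition is supported in $\elll(B)$ (possible because the trajectory exits into $\elll(B)$ by time $T$, so the transverse transition can be placed at the past end of the tube). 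Setting $g_1\defeq g_2\cdot\kappa$ for a suitable positive $\kappa\in S^1_{\rm h}$ produces $g\defeq g_1 g_2\in S^{2s+1}_{\rm h}$ satisfying $g_1 g_2$ of definite sign, $\wf'(\Op g)\subset\elll(\widetilde B)$, $g_2$ elliptic on $\wf'(A)$, and
\[
\sigma_\pr\big([P,iG_1 G_2]-G_2^*G_2\big)\geq 0 \text{ on } T^*M\setminus\elll(B);
\]
the dominant positive term $\chi'(\phi)\eta|\xi|^{2s+1}$ in $H_p g$ controls $g_2^2$, while the transverse error $\chi H_p\eta\cdot|\xi|^{2s+1}$ is supported in $\elll(B)$ and the remaining subprincipal corrections are absorbed by the choice of $\kappa$. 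The sign convention for the factorization is fixed to match Lemma \ref{lem:main} so as to yield $(P-\omega-i\epsnu Q)u$ in the conclusion.

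With $G_i=\Op(g_i)$ and $B_1\in\Psi^0$ selected so that $\wf'(G_2)\subset\elll(B_1)$ and $\wf'(B_1)\subset\elll(\widetilde B)$, Lemma \ref{lem:main} applied with $m=2s+1$ yields
\[
\|G_2 u\|\leq C\bigl(\|Bu\|_s+\|(P-\omega-i\epsnu Q)u\|_{s+1}+\|B_1 u\|_{s-\12}+\ON\bigr).
\]
To localize the middle term by $\widetilde B$, I would modify step \textbf{3} of the proof of Lemma \ref{lem:main} slightly: since $\wf'(G)\subset\elll(\widetilde B)$, there is $E\in\Psi^0$ with $(E\widetilde B-\Id)G\in\Psi^{-\infty}$, and writing $\bra f,Gu\ket=\bra\widetilde B f,E^*Gu\ket+\bra(E\widetilde B-\Id)f,Gu\ket$, the last bracket is bounded by $\ONs$ using that $(E\widetilde B-\Id)^*G$ is smoothing together with $\|f\|_{-N-\ell}\leq C\ON$ (valid uniformly for $\epsnu$ in a bounded range). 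The elliptic estimate (Theorem \ref{elliptic}) applied with $G_2$ elliptic on $\wf'(A)$ then gives $\|Au\|_s\leq C\|G_2 u\|+C\ON$, and the remaining $\|B_1 u\|_{s-\12}$ term is absorbed by iterating the whole argument at shifted Sobolev orders $s-\12,s-1,\dots$, with $B_1$ replaced at each step by a cutoff of slightly larger microsupport still contained in $\elll(\widetilde B)$, until the low-order remainder is dominated by $\ON$. Uniformity in $\epsnu\geq 0$ and $|\omega|\leq\delta$ is inherited directly from Lemma \ref{lem:main}. The main technical obstacle is the escape function construction with the correct microsupport and positivity properties; everything else is a routine application of the lemma and the elliptic estimate.
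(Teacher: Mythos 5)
Your overall route coincides with the paper's: the paper also proves the estimate by invoking Lemma \ref{lem:main} with the classical Duistermaat--H\"ormander escape function from \cite[Thm.~E.47]{DZbook}, taking $G_1 G_2 = -G^*G$ and $G_2$ proportional to $YG$, and then repeating the steps following (E.4.27) there. Your outline of the partition of unity, the flow tube, the application of Lemma \ref{lem:main} with $m=2s+1$, the $\widetilde B$-localization of the source term via a microlocal parametrix (using $\epsnu$ bounded to control $\|(P-\omega-i\epsnu Q)u\|_{-N-\ell}$ by $\ON$), and the iterative removal of $\|B_1u\|_{s-\12}$ is all in order. The weak point is the escape function construction, which as written does not satisfy conditions \eqref{cond1}--\eqref{cond2}.

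Two concrete problems. First, the sign: setting $g_1 = g_2\kappa$ with $\kappa>0$ gives $g_1g_2=\kappa g_2^2\geq 0$, which corresponds to the $+$ branch of Lemma \ref{lem:main} and yields an estimate on $(P-\omega+i\epsnu Q)u$, not $(P-\omega-i\epsnu Q)u$; the paper's $G_1G_2=-G^*G\leq 0$ selects the $-$ branch, so $\kappa$ must be negative. Second, and more substantively, the positivity: condition \eqref{cond2} asks that $H_p(g_1g_2)-g_2^2\geq 0$ on $T^*M\setminus\elll(B)$, i.e.\ \emph{everywhere} $g_2^2=\chi\eta|\xi|^{2s}$ is positive and outside $\elll(B)$. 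The term you identify as dominant, $\chi'(\phi)\eta|\xi|^{2s+1}$, is supported only where $\chi$ transitions (near $\phi\approx -T$, inside $\elll(B)$), so it vanishes precisely where positivity is needed; away from that region $H_p(g_1g_2)$ is governed by $H_p\eta$ and $H_p|\xi|$ which have no sign and are comparable in order to $g_2^2$. The standard fix (and what (E.4.16) encodes) is to put an exponential weight $e^{-\lambda\phi}$ into the commutant, so that for $g_1g_2=-e^{-\lambda\phi}\chi\eta|\xi|^{2s+1}$ one has $H_p(g_1g_2)=\lambda e^{-\lambda\phi}\chi\eta|\xi|^{2s+1}-e^{-\lambda\phi}\chi'\eta|\xi|^{2s+1}-e^{-\lambda\phi}\chi H_p(\eta|\xi|^{2s+1})$, and the first term dominates $g_2^2$ and the subprincipal corrections for $\lambda$ large, while the negative terms either sit in $\elll(B)$ or are lower order. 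Your phrase ``absorbed by the choice of $\kappa$'' may be hinting at this, but as stated the necessary large-parameter positivity is missing. Relatedly, the claim that $H_p\eta$ can be supported at the past end of the tube is imprecise: for a genuinely transverse cutoff, $H_p\eta$ is supported on the lateral boundary throughout the tube; what one actually does is rectify the flow locally (or iterate over short time steps, as in DZ) so that $H_p\eta$ either vanishes or is pushed into $\elll(B)$ together with $\chi'$.
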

\proof The proof is only a slight modification of the well-known one presented in \cite[Thm.~E.47]{DZbook}, so we only sketch it. If $G,Y$ are the operators defined in (E.4.16) in \cite{DZbook}, then we can apply Lemma \ref{lem:main} with $G_1 G_2=-G^* G$ and  $G_2$ proportional to $Y G$. It then remains to repeat the steps following (E.4.27) in \cite{DZbook}.  \qed 

\medskip

Let $\zero$ be the zero section of $T^*M$. We denote by $\bTM$ the fiber radial compactification of the cotangent bundle (see e.g.~\cite[E.1.3]{DZbook}). Its boundary $\partial\bTM$ is diffeomorphic to the quotient of $T^*M\setminus\zero$ by dilations in the dual variables. The quotient map is denoted by
\[
\kappa: \bTM\setminus\zero \to \p\bTM.
\]

Note that $p$ extends to a smooth function on $\bTM$. The closure of the characteristic set $\Sigma_\omega= p^{-1}(\omega)$ of $P-\omega$ in $\p\bTM$ is denoted by $\bSig_\omega$, and we set $\p \bSig_\omega=\p\bTM\cap \bSig_\omega$. The rescaled Hamiltonian vector field $|\xi| H_p$ commutes with dilations in $\xi$ and is homogeneous of order $0$, so it defines a vector field
$$
X\defeq \kappa_*(|\xi| H_p) 
$$
on $\bSig_\omega$. Its flow is also denoted by  $\Phi_t$.

We say that a set $L\subset \bTM$ is $\Phi$-invariant if $\Phi_t(L)\subset L$ for all $t\in\rr$.

The analysis in \cite{CdV} motivates the following definition.
\begin{definition}\label{defat} We say that a $\Phi$-invariant closed set $L^\pm_\omega\subset \p\bSig_\omega$ is a \emph{weakly hyperbolic attractor}/\emph{repulsor} if:
\ben
\item there exists an open neighborhood $U$ of $L^\pm_\omega$ in $\p\bSig_\omega$ such that $\bigcap_{\pm t \geq 0}\Phi_t(U)=L^\pm_\omega$;
\item there exists $\beta>0$ and $k\in S^1_{\rm h}(T^*M)$  such that:
\begin{enumerate}
\item[a)]\label{hshhw1}  $\pm k>0$ on $\Lambda^\pm_\omega$, 
\item[b)]\label{hshhw2} $\ham_p  k  >  \beta$ on $\Lambda^\pm_\omega$,
\end{enumerate}
\een  
where we set $\Lambda^\pm_\omega =\kappa^{-1}(L^\pm_\omega)\subset \bTM\setminus\zero $.
\end{definition}

\begin{remark}\label{rmk:ss} It follows from \cite[Prop.~3.2]{CdV} that any weakly hyperbolic repulsor $L^-_\omega$ is a \emph{radial source} (in the precise sense of \cite[Def.~E.50]{DZbook}) for the first-order symbol $-k (p-\omega)$, and similarly $L^+_\omega$ is a \emph{radial sink}, cf.~\cite[Lem.~2.1]{DZ} for a very closely related statement in the Morse--Smale case, with $-k$ replaced by $|\xi|$ . This allows one to use radial estimates as in \cite{DZ,CdV} after a suitable generalization to account for the presence of the viscosity term $Q$ using Lemma \ref{lem:main}.  We proceed here however slightly differently and derive estimates for the zero-order operator $P$ directly.
\end{remark}

Since we are interested in a neighborhood of $\omega_0=0$, in this subsection we assume $L_\omega^\pm\subset  \p\bSig_\omega$ are weakly hyperbolic attractors/repulsors for $\omega\in[-\delta,\delta]$, and we set
\beq\label{eqlpm}
L^\pm:= \bigcup_{\omega\in [-\delta,\delta]}L_\omega^\pm, \quad  \Lambda^\pm:=  \bigcup_{\omega\in [-\delta,\delta]}\Lambda_\omega^\pm.
\eeq

\begin{definition}\label{def:basin} The \emph{forward}/\emph{backward} \emph{basin} of a $\Phi$-invariant set $L\subset \bigcup_{\omega\in [-\delta,\delta]} \bSig_\omega$, denoted by $\Phi^\pm(L)$,  is the set of all $q\in p^{-1}([-\delta,\delta])$ such that $\Phi_t(q)\to L$ as $t\to\pm \infty$.  
\end{definition}

Note in particular the inclusion $\Lambda^\pm\subset \Phi^\pm(L^\pm)$; note also that $L^\pm$ are {forward}/{backward} {basins}. 

One can find an ``escape function'' $k$ with the following better properties. 

\begin{lemma}\label{lembetterk} For any $q\in \Phi^\pm(L^\pm)$ there exists $\beta>0$ and $k\in S^1_{\rm h}(T^*M)$ such that $\supp k\cap  p^{-1}([-\delta,\delta])  \subset  \Phi^\pm(L^\pm)$ and:
\begin{enumerate}
\item[a)]\label{hshhww1}  $\pm k>0$ on a conic neighborhood of $\Lambda^\pm$ containing $q$,
\item[b)]\label{hshhww2} $\ham_p  k  >  \beta$ on $\Phi^\pm(L^\pm)$.
\end{enumerate}
\end{lemma}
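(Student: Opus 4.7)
I focus on the $+$ case, as the $-$ case follows by reversing the flow. The plan is to modify the escape function $k_0 \in S^1_{\rm h}(T^*M)$ provided by Definition \ref{defat}, which only gives $k_0 > 0$ and $\ham_p k_0 > \beta$ on $\Lambda^+$, so that the positive commutator bound extends from $\Lambda^+$ to the whole basin $\Phi^+(L^+)$ and $k>0$ holds on a conic neighborhood containing both $\Lambda^+$ and the prescribed point $q$, with the support condition enforced by a phase-space cutoff.

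First, by continuity and compactness of $L^+ \subset \partial \bTM$, the positivity of $k_0$ and $\ham_p k_0$ persists on an open conic neighborhood $W$ of $\Lambda^+$; the weakly hyperbolic attractor property implies that $\Phi^+(L^+)$ is open, so $W \subset \Phi^+(L^+)$ can be arranged.

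Second, to bring $q$ into the positivity region, I pick $T_0 > 0$ and a conic neighborhood $V$ of $q$ with $\Phi_{T_0}(V) \subset W$ and the flow tube $\bigcup_{0 \leq s \leq T_0} \Phi_s(V)$ contained in $\Phi^+(L^+)$, and set $k_1(z) := \chi(z)\,k_0(\Phi_{T_0}(z))$ for a cutoff $\chi \in S^0_{\rm h}$ equal to $1$ near $V$ and supported in this tube. Then $k_1 \in S^1_{\rm h}$; since $\Phi_t$ is the flow of $|\xi|\ham_p$, a chain rule computation on $\{\chi = 1\}$ gives $\ham_p k_1 = (|\xi \circ \Phi_{T_0}|/|\xi|)\,(\ham_p k_0) \circ \Phi_{T_0}$, which is uniformly bounded below on the cosphere bundle. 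The final function is $k := (A k_0 + k_1)\,\chi_{\rm bas}$ for a large $A > 0$ and a cutoff $\chi_{\rm bas} \in S^0_{\rm h}$ supported in $\Phi^+(L^+)$ and equal to $1$ on a conic neighborhood of $\Lambda^+ \cup \overline{V}$; for $A$ large the term $A\,\ham_p k_0$ dominates the errors from $\ham_p \chi$ and $\ham_p \chi_{\rm bas}$ on $W$, while $\ham_p k_1$ provides a direct positive contribution on the tube.

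The main obstacle is guaranteeing $\ham_p k > \beta$ uniformly on the \emph{whole} basin: if $W$ together with the tube through $q$ fails to cover $\Phi^+(L^+)$, one must iterate the construction of $k_1$ along a finite chain $q = q_0, q_1, \ldots, q_N \in W$ (existing by compactness on the cosphere bundle, since every orbit in the basin eventually enters $W$) and sum the resulting pieces with sufficiently large positive weights so that $\ham_p$ of the sum is uniformly positive on $\Phi^+(L^+)$.
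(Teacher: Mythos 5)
The paper's proof is a two-step affair that is structurally quite different from yours: it first constructs $k_2$ by the Abelian averaging formula from \cite[Sec.~3.2.2]{CdV},
\[
k_2 = \lim_{t\to\infty}\Bigl(k_1\circ\Phi_t - \int_0^t m\circ\Phi_s\,ds\Bigr) \quad\text{on } \Phi^+(L^+),
\]
with $m\in S^0_{\rm h}$ positive and equal to $\ham_p k_1$ on an invariant conic neighborhood $U$ of $\Lambda^+$, which gives $\ham_p k_2 = m>0$ on the \emph{whole} basin in a single stroke, no cutoffs; it then pulls back by the flow, $k:=k_2\circ\Phi_s$, to bring the prescribed point $q$ into the positivity region. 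Your last ingredient --- flowing $q$ forward into the region where $k_0>0$ --- is morally the same as the paper's pullback step. But your route to uniform positivity of $\ham_p k$ on the basin is by patching cutoff pieces along flow tubes with large weights, which is a genuinely different strategy from the paper's averaging.

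The patching strategy as you sketch it has a real gap. You write $k=(Ak_0+k_1)\chi_{\rm bas}$ and claim that ``for $A$ large the term $A\,\ham_p k_0$ dominates the errors from $\ham_p\chi$ and $\ham_p\chi_{\rm bas}$ on $W$.'' The trouble is that $\ham_p\chi_{\rm bas}$ is supported on the transition region of $\chi_{\rm bas}$, which by construction sits \emph{inside} the basin but \emph{outside} the set $\{\chi_{\rm bas}=1\}\supset\Lambda^+\cup\overline V$, hence in general outside $W$. There, $A\,\ham_p k_0$ has no sign and $\ham_p k_1$ vanishes (since $\chi$ is supported in the finite-time tube from $V$), so the error $(Ak_0+k_1)\ham_p\chi_{\rm bas}$ is not absorbed by anything. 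The same difficulty resurfaces in your final chaining remark: the chain $q_0,\dots,q_N$ covers a compact subset of the open basin, but the cutoff $\chi_{\rm bas}$ must still turn off somewhere inside the basin, and each $\ham_p\chi_{\rm bas}$-type error must land in a region where some already-built piece is positive. You have not arranged the cutoffs so that this happens (e.g.~by shaping each cutoff to transition only where the next piece is uniformly positive), and without this bookkeeping the uniform lower bound $\ham_p k>\beta$ on $\Phi^+(L^+)$ does not follow. A minor, separate slip: since $\Phi_t$ in this lemma is the flow of $\ham_p$ (not the rescaled field $|\xi|\ham_p$ used on $\partial\bTM$), one has $\ham_p(k_0\circ\Phi_{T_0})=(\ham_p k_0)\circ\Phi_{T_0}$ exactly, without the $|\xi\circ\Phi_{T_0}|/|\xi|$ factor you inserted.
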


\proof Let $k_1$ be a symbol satisfying a) and b) of Definition \ref{defat}. We can assume that $\supp k_1\cap p^{-1}([-\delta,\delta]) \subset  \Phi^\pm(L^\pm)$. Next, we proceed exactly as in \cite[Sec.~3.2.2]{CdV}. Namely, in the attractor case (and similarly in the repulsor case) we take $k_2\in S^1_{\rm h}(T^*M)$ such that
\[
k_2 =\lim_{t\to\infty}\left(k_1\circ \Phi_t-\textstyle\int_0^t m\circ \Phi_s \right) \mbox{ on } \Phi^+(L^+)
\]
 for some positive $m\in S^0_{\rm h}(T^*M)$ that equals $\ham_p k$ on a $\Phi$-invariant conic neighborhood $U$ of $\Lambda^\pm$. This way we obtain a symbol $k_2$ satisfying all the requested properties except that we do not have necessarily $\pm k_2(q)>0$. However, since $q\in \Phi^\pm(L^\pm)$, we have $\Phi_{s}(q)\subset U$ for some $s\in\rr$. Thus, the symbol $k\defeq k_2\circ \Phi_s$ has all the stated properties.\qed  

\medskip

The purpose of Lemma \ref{lem:B} below (which plays an analogous role to \cite[Lem.~E.53]{DZbook}) is to have a symbol $c\in S^0(T^*M)$ that will serve to microlocalize around $q$ without losing control of positivity of Poisson brackets whenever possible.

\begin{lemma}\label{lem:B}  Let $q$ and $k$ be as in Lemma \ref{lembetterk}. Then there exists $c\in S^0(T^*M)$ such that:
\begin{enumerate}
\item[a)]\label{wqewegt0} $c\geq 0$ everywhere, $c>0$ at $q$,
\item[b)]\label{wqewegt1} $c=0$ in a neighborhood of $p^{-1}([-\delta,\delta])\setminus \Phi^\pm(L^\pm)$ containing $\{k=0\}$,
\item[c)]\label{wqewegt2} $\pm\ham_p c\geq 0$ on $\Phi^\pm(L^\pm)$.
\end{enumerate} 
\end{lemma}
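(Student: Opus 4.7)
The plan is to take $c$ as a smooth non-decreasing cutoff composed with a degree-zero rescaling of the escape function $k$ supplied by Lemma \ref{lembetterk}. Fix once and for all a smooth strictly positive symbol $\rho\in S^1(T^*M)$ (for example $\rho=(1+|\xi|_g^2)^{1/2}$ with respect to some fibre metric $g$) and set $\tilde k\defeq k/\rho\in S^0(T^*M)$; then $\tilde k$ is a bounded smooth function with the same sign as $k$. I will then set
\[
c\defeq \chi(\pm\tilde k),
\]
with $+$ in the attractor case and $-$ in the repulsor case, where $\chi\in C^\infty(\mathbb{R};[0,1])$ is non-decreasing, $\chi\equiv 0$ on $(-\infty,a]$ and $\chi\equiv 1$ on $[b,\infty)$ for constants $0<a<b$ chosen below. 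Since $\tilde k\in S^0$ and $\chi$ is bounded and smooth, $c\in S^0(T^*M)$ and $c\geq 0$.

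Conditions (a) and (b) fall out almost at once. The open set $U\defeq\{\pm\tilde k<a\}$ is contained in $\{c=0\}$; it contains $\{k=0\}$ because $k=0$ gives $\tilde k=0<a$, and it contains $p^{-1}([-\delta,\delta])\setminus\Phi^\pm(L^\pm)$ because the support condition $\supp k\cap p^{-1}([-\delta,\delta])\subset\Phi^\pm(L^\pm)$ of Lemma \ref{lembetterk} forces $k$, and therefore $\tilde k$, to vanish there. This gives (b). For (a), Lemma \ref{lembetterk}(a) yields $\pm k(q)>0$, hence $\pm\tilde k(q)>0$, and choosing $a$ smaller than this positive number gives $c(q)>0$.

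Condition (c) reduces to a short computation. From $c=\chi(\pm\tilde k)$ we get
\[
\pm H_p c \;=\; \chi'(\pm\tilde k)\, H_p\tilde k, \qquad H_p\tilde k \;=\; \rho^{-1}\bigl(H_p k -\tilde k\, H_p\rho\bigr).
\]
On $\Phi^\pm(L^\pm)$ Lemma \ref{lembetterk}(b) provides $H_p k\geq \beta>0$, while on $\supp\chi'(\pm\tilde k)$ we have $|\tilde k|\leq b$. Since $\rho\in S^1$ is smooth and positive everywhere, $p\in S^0$, and $M$ is compact, the function $H_p\rho$ lies in $S^0(T^*M)$ and is uniformly bounded; set $M_1\defeq\sup_{T^*M}|H_p\rho|<\infty$. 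Choosing $b<\beta/M_1$, and then $a$ small enough that both $a<b$ and $a<\pm\tilde k(q)$, we obtain $H_p k-\tilde k\, H_p\rho\geq \beta-bM_1>0$, hence $H_p\tilde k\geq 0$ on $\Phi^\pm(L^\pm)\cap\supp\chi'(\pm\tilde k)$, and therefore $\pm H_p c\geq 0$ on $\Phi^\pm(L^\pm)$. The only point requiring care is this calibration: $b$ must be small enough for $H_p k$ to beat the parasitic term $\tilde k\, H_p\rho$, while $a$ must still admit $q$ into $\supp c$. Picking a smooth (as opposed to strictly homogeneous) $\rho$ is what keeps $H_p\rho$ globally bounded and avoids an additional cutoff near the zero section.
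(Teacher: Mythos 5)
Your proposal is correct and takes essentially the same approach as the paper's proof: the paper also sets $c=\chi_\epsilon(\bra\xi\ket^{-1}k)$ for a non-decreasing cutoff $\chi_\epsilon$, runs the identical quotient-rule computation for $H_pc$, and calibrates the support of $\chi'_\epsilon$ so that $\ham_p k>\beta$ dominates the parasitic term $k\,\ham_p\bra\xi\ket$, with the same appeal to $\supp k\cap p^{-1}([-\delta,\delta])\subset\Phi^\pm(L^\pm)$ for item b). Your only cosmetic difference is writing out the calibration in terms of generic thresholds $a<b$ rather than the paper's explicit $\epsilon/(4\alpha)$, $\epsilon/(2\alpha)$.
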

\proof Set $\alpha\defeq\sup\left|\ham_p \bra\xi \ket \right|$. For $\epsilon>0$, let $\chi_{\epsilon}\in C^\infty(\rr;[0,1])$ be such that 
\[
\chi_{\epsilon}\equiv 0 \mbox{ on } ]-\infty,\textstyle\frac{\epsilon}{4\alpha}], \ \ \chi_{\epsilon}\equiv 1 \mbox{ on } [\textstyle\frac{\epsilon}{2\alpha},+\infty[,
\]
 and $\chi'_{\epsilon}\geq 0$ everywhere. 

 In the attractor case, let $c\defeq \chi_{\epsilon} (\bra\xi \ket^{-1}k)$, with $\epsilon\leq \beta/2$ small enough to ensure $c>0$ at $q$. We have
\[
\ham_p c = \chi'_{\epsilon}(\bra\xi \ket^{-1} k)\frac{\bra\xi\ket\ham_p k -k \ham_p\bra\xi\ket}{\bra\xi\ket^2}\geq \chi'_{\epsilon}(\bra\xi \ket^{-1} k)\frac{\bra\xi\ket \beta -k \alpha}{\bra\xi\ket^2} \geq 0
\]
on $\Phi^+(L^+)$ since $\frac{\epsilon}{4\alpha} \bra \xi\ket \leq k\leq \frac{\epsilon}{2\alpha} \bra \xi\ket$ on $\supp\chi'_{\epsilon}$. Besides, $c=0$ on $\{k\leq \frac{\epsilon}{4\alpha} \}$. Thus, the second part of b) follows from $\supp k\cap p^{-1}([-\delta,\delta]) \subset  \Phi^\pm(L^\pm)$.

In the negative case the proof is analogous with $c\defeq \chi (-\bra\xi \ket^{-1}k)$. \qed

We prove below a radial estimate which gives regularity in the basin of a repulsor.

\begin{proposition}\label{lem:consp} Let $s\in \rr$, $s\neq -\12$, and let $\delta>0$ be sufficiently small.  If $A\in\Psi^0(M)$ satisfies $\wf'(A)\cap p^{-1}([-\delta,\delta]) \subset \Phi^-(L^-)$, then for all $N$ and $u\in\cf(M)$,
\beq\label{eq:lemconsp}
\|  A u \|_{s} \leq  C ( \| (P-\omega-\i \epsnu Q) u \|_{{m-s}}   + \ON )
\eeq
uniformly in $\epsnu\geq 0$ and $|\omega|\leq\delta$, where $m=0$ if $s<-\12$ and $m=1+2s$ if $s>-\12$. 
\end{proposition}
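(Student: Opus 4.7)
My plan is to establish a radial commutator estimate in a conic neighborhood of the repulsor $L^-$ using Lemma \ref{lem:main}, and then to propagate the resulting control throughout the basin $\Phi^-(L^-)$ via Proposition \ref{propagation}. A microlocal partition of unity allows a reduction to the case where $\wf'(A)$ lies in a small conic neighborhood of a single point $q \in \Phi^-(L^-)$, uniformly in $\omega \in [-\delta,\delta]$. If $q \notin L^-$, then by Definition \ref{def:basin} the backward Hamilton trajectory from $q$ converges to $L^-$, so Proposition \ref{propagation} (applied along that trajectory) reduces the estimate at $q$ to one on a fixed conic neighborhood $V$ of $L^-$; the heart of the proof is thus the radial estimate on $V$.

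For the radial estimate, I would invoke Lemmas \ref{lembetterk} and \ref{lem:B} to obtain $k \in S^1_{\rm h}(T^*M)$ and $c \in S^0(T^*M)$ with $\ham_p k > \beta > 0$ and $-\ham_p c \geq 0$ on $\Phi^-(L^-)$, and $c > 0$ at a point of $L^-\cap V$. Then I build $G_1 G_2$ with principal symbol
\[
g_1 g_2 = -\chi(c)^2\, w,
\]
where $\chi$ is a non-decreasing cutoff vanishing near the origin and $w > 0$ is a weight homogeneous of degree $2s+1$ (a power of $\langle\xi\rangle$, or if necessary of $-k$, to secure the correct sign below). The overall sign $-$ is dictated by matching the viscosity $-i\nu Q$ in \eqref{eq:lemconsp} to the $\pm$ of Lemma \ref{lem:main}, which forces $-G_1 G_2 \geq 0$. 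The Hamilton derivative decomposes as
\[
\ham_p(g_1 g_2) = -2\chi\chi'(c)(\ham_p c)\,w - \chi^2\,(\ham_p w).
\]
The first term is nonnegative on $\Phi^-(L^-)$ thanks to $-\ham_p c \geq 0$, with the complementary region (where $\chi$ sees no variation) absorbed via the $B$-term of Lemma \ref{lem:main} through ellipticity of $P - \omega$ off $\{p = \omega\}$. The radial term $-\chi^2\ham_p w$ has sign at $L^-$ governed by $(2s+1)\ham_p k$; the form and degree of $w$ are chosen---using Remark \ref{rmk:ss} to switch to $-k$ as weight when needed---so that this sign is positive. This yields $\ham_p(g_1 g_2) - g_2^2 \geq 0$ off $\elll(B)$ with $g_2$ of order $s$ and elliptic at the relevant portion of $L^-$. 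The order $m$ of $G_1 G_2$ is $1+2s$ when $s > -\frac12$ and $0$ when $s < -\frac12$; the value $s = -\frac12$ is excluded precisely because the prefactor $2s+1$ vanishes there.

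Lemma \ref{lem:main} then yields
\[
\|G_2 u\| \leq C\bigl(\|(P-\omega-i\nu Q)u\|_{m-s} + \|B_1 u\|_{s-\frac12} + \|u\|_{-N}\bigr),
\]
and the residual $\|B_1 u\|_{s-\frac12}$ is removed by iterating the same radial estimate at decreasing Sobolev indices $s-\frac12, s-1, \ldots$ (each step choosing the appropriate $m$ and avoiding $-\frac12$) down to an index below $-N$; the resulting control near $L^-$ is then propagated to $q$ via Proposition \ref{propagation}. The main obstacle I anticipate is the coordinated choice of sign and weight in the commutant: the sign of $i\nu Q$ in \eqref{eq:lemconsp} fixes the sign of $G_1 G_2$ via Lemma \ref{lem:main}, and under that constraint the positivity of the relevant Poisson bracket dictates the dichotomy $m = 0$ versus $m = 1+2s$ at the critical threshold $s = -\frac12$.
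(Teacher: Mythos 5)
Your overall skeleton (microlocal partition, escape function $k$ with $\ham_p k>\beta$, cutoff $c$ with definite sign of $\ham_p c$, positive-commutator via Lemma~\ref{lem:main}, then removal of the $B_1$ remainder) matches the paper's; the one genuine organisational difference is that you propagate from $q$ into a fixed neighborhood of $L^-$ \emph{before} running the radial argument, whereas the paper's Lemmas~\ref{lembetterk}--\ref{lem:B} produce $k$ and $c$ adapted to the chosen $q\in\Phi^-(L^-)$ directly (so $\ham_p k>\beta$ on the whole basin and $c>0$ at $q$), letting the commutator estimate be applied at $q$ in one shot. That is a legitimate alternative organisation.

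The substantive gap is in the construction of the commutant for the case $s<-\frac12$. You propose $g_1 g_2=-\chi(c)^2 w$ with $w$ a positive weight homogeneous of degree $2s+1$ (a power of $-k$ or of $\langle\xi\rangle$). Writing the radial term as $-\chi^2\ham_p w$, with $w=(-k)^{2s+1}$ one gets $\ham_p w=-(2s+1)(-k)^{2s}\ham_p k$, so $-\chi^2\ham_p w=(2s+1)\chi^2(-k)^{2s}\ham_p k$. Since $\ham_p k>\beta>0$ on the basin and $(-k)^{2s}>0$, this term is nonnegative if and only if $2s+1>0$, i.e.\ $s>-\frac12$. For $s<-\frac12$ it has the wrong sign, and the same problem appears with $w=\langle\xi\rangle^{2s+1}$ since $\ham_p\langle\xi\rangle$ has no definite sign. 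Moreover, with $\deg w = 2s+1$ the product $g_1 g_2$ has order $2s+1$, contradicting your own claim that $m=0$ when $s<-\frac12$. The paper circumvents this by taking $g_1 g_2$ to be (the symbol of) $B_2^* g(K) B_2$ where $g$ is \emph{not} homogeneous: $g$ is constructed as the antiderivative of $g'(\lambda)=\langle\lambda\rangle^{2s}$, so that $g'\in S^{2s}$ gives $G_2\in\Psi^s$, while $g$ itself is bounded (hence of order $0$) on the half-line where $K$ lives when $s<-\frac12$, and only has growth $1+2s$ when $s>-\frac12$. This asymmetry---order $2s$ for $g'$ versus order $0$ or $1+2s$ for $g$ depending on the sign of $s+\frac12$---is exactly what yields the dichotomy for $m$ and is not reproducible with a single homogeneous weight. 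So your proof, as written, establishes the case $s>-\frac12$ but does not cover $s<-\frac12$; fixing it requires replacing the homogeneous weight with a carefully integrated $g$ (or equivalently borrowing the paper's functional-calculus construction $g(K)$ and Proposition~\ref{prop:fc} to control its symbol).

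A secondary point: your plan to remove $\|B_1 u\|_{s-1/2}$ by iterating at decreasing Sobolev indices works in principle, but each step requires the forbidden value $-\frac12$ to be dodged and a fresh construction of $w$ (with the $s<-\frac12$ issue above) as soon as the index drops below $-\frac12$. The paper's route---propagate $\|B_1 u\|_{s-1/2}$ back into $\|A u\|_{s-1/2}$ via Proposition~\ref{propagation} and absorb by Sobolev interpolation---is cleaner and avoids re-running the radial argument.
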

\proof   \textbf{1.} By the elliptic estimate outside of $p^{-1}([-\delta,\delta])$ and a microlocal partition of unity argument it suffices to prove \eqref{eq:lemconsp} for any $A\in\Psi^0(M)$ such that $\wf'(A)$ is contained in a small neighborhood of an arbitrary point $q\in \Phi^-(L^-)$.    

Let us fix $q\in \Phi^-(L^-)$ and let $k\in S^1(T^*M)$ and $c\in S^0(T^*M)$ be as in Lemmas \ref{lembetterk}, \ref{lem:B}. In particular, $k<0$ on $T^*M\setminus \{c=0\}$. Let $K\in\Psi^1(M)$ be an elliptic quantization of a symbol that equals $k$ on a neighborhood of $T^*M\setminus \{c=0\}$ and such that $K \leq 0$. 

Our first objective is to show that the assumptions of Lemma \ref{lem:main} are satisfied. We will construct $G_1$ and $G_2$ as microlocalizations of suitably designed functions of $K$. In the two respective cases $s<-\12$ and $s>-\12$ we define a function $g$ as follows:

a) Assume $s<-\12$. Set 
\beq\label{eq:firstg}
g(\lambda)=-\int_{\lambda}^{\infty} \bra \tau \ket^{2s} d\tau,
\eeq
Then $g\in  S^0(\rr)\cap S^{1+2s}(\rr_+)$, $g < 0$. Furthermore, $g'(\lambda)=\bra \lambda \ket^{2s}$, so $g'\in S^{2s}(\rr)$ and $g'>0$.

b) Assume $s>-\12$. Note that in that case \eqref{eq:firstg} is ill-defined. Instead, let $\chi_-\in C^\infty(\rr;[0,1])$ be such that $\chi_-\equiv 1$ on $\opencl{-\infty,0}$ and $\chi_-\equiv 0$ on $\clopen{1,\infty}$, and set 
\[
g(\lambda)=-\int_{\lambda}^{\infty}\chi^2_-(\tau) \bra \tau\ket^{2s} d\tau,
\]
so that
\[
g'(\lambda)=\chi^2_-(\lambda) \bra \lambda\ket^{2s}.
\]
Then $g\in S^{1+2s}(\rr)\cap S^{-\infty}(\rr_+)$, $g\leq 0$, $g'\in S^{2s}(\rr)\cap S^{-\infty}(\rr_+)$ and $g'\geq 0$. Note that since $K\leq 0$, we have
\beq\label{eq:specialK}
g'(K)=\bra K\ket^{2s}. 
\eeq

Let $B_2\in\Psi^0(M)$ be the quantization of $c^{\12}$. We set
\[ 
G_1=\beta^{-\12} B^*_2 g(K)\left(g'(K)\right)^{-\12}, \ \ G_2=\beta^{\12}\left(g'(K)\right)^{\12} B_2.   
\]
Then 
\[
G_1 G_2 = B^*_2 g(K) B_2, \ \ G_2^* G_2 = \beta B^*_2 g'(K) B_2,
\]
hence in particular $G_1 G_2\leq 0$. 

 By Proposition \ref{prop:fc}, $G_1\in \Psi^{m}(M)$, $G_2\in\Psi^s(M)$, and
\[
\sigma_{\pr}(G_1 G_2)= c g(k), \ \ \sigma_{\pr}(G_2^*G_2)=\beta c  g'(k)
\]
everywhere, modulo lower order terms (using the fact that $c\equiv 0$ in the region where the symbol of $K$ differs from $k$).  Next, recall that $\ham_p  k  >  \beta$ on $\Phi^\pm(L^\pm)$. By Proposition \ref{prop:fc}, the principal symbol of $[P,\i G_1 G_2]-G_2^*G_2$ is
\beq\label{eq:prcom}
 \ham_p c g( k) - \beta c g'( k)= (\ham_p c) g(k)+  c g'(k)(\ham_p k-\beta).
\eeq 
We have $c g'(k)(\ham_p k-\beta)\geq 0$ on the  set $\Phi^\pm(L^\pm)\cup \{c=0\}$, which is a neighborhood of $\Sigma$ thanks to property b) in Lemma \ref{lembetterk}.
Besides, $(\ham_p c) g(k)\geq 0$ on the same set. Thus \eqref{eq:prcom} is $\geq 0$ in a neighborhood of $\Sigma$. 

Since $\wf'(G_2)=\supp c$, applying Lemma \ref{lem:main} yields  
\beq\label{eq:tempa}
\|  G_2 u \| \leq C ( \|B u\|_{{s}}  + \| (P-\omega-\i \epsnu Q) u \|_{{m-s}}  +  \| B_1  u \|_{{s-1/2}} + \ON),
\eeq
for any $B_1,B\in\Psi^0$ such that $\supp c \subset \elll B_1$, and $B$ is elliptic outside a neighborhood of $\Sigma$. In particular we can take $B_1,B$ such that in addition $\wf'(B_1)\subset \{ k<0\}$ and $\wf'(B)\cap \Sigma =\emptyset$. Using the elliptic estimate (Theorem \ref{elliptic}) we can bound $\|A u\|_{s}$ by $\| G_2 u \|$, and bound $\|B u\|_{{s}}$. 

To sum this up, we  have shown for any closed $V\subset\{k<0\}$ that for all $A\in \Psi^0(M)$ with $\wf'(A)\subset V$, there exists $B_1\in\Psi^0$ with $\wf'(B_1)\subset \{ k<0\}$ such that
\beq\label{eq:lem1a}
\|  A u \|_s \leq  C( \| (P-\omega-\i \epsnu Q)u  \|_{{m-s}}  +  \| B_1  u \|_{{s-1/2}} + \ON).
\eeq
\textbf{2.} To show that the $H^{s-\12}(M)$ term in \eqref{eq:lem1a} can be removed, we proceed exactly as in \cite{DZbook}. Namely, by propagation of singularities (Proposition \ref{propagation}) we can estimate $\| B_1  u \|_{{s-1/2}}$ by $\| A  u \|_{{s-1/2}}$ (and other harmless terms), which can be then absorbed into the l.h.s.~using interpolation in Sobolev spaces. \qed      

Next, we obtain a radial estimate which can be interpreted as propagation of regularity into a repulsor from its basin.

\begin{proposition}\label{lem:consp2}  Let $s<-\12$, and let $\delta>0$ be sufficiently small. There exists $B\in\Psi^0(M)$ satisfying $\wf'(B)\cap \Sigma \subset \Phi^+(L^+)\setminus \Lambda^+$, such that  if $A\in\Psi^0(M)$ satisfies $\wf'(A)\cap p^{-1}([-\delta,\delta]) \subset \Phi^+(L^+)$, then for all $N$ and $u\in\cf(M)$,
\beq\label{eq:lem2}
\|  A u \|_{s} \leq C (\|B u\|_{{s}}  +  \|(P-\omega-\i \epsnu Q) u \|_{{s+1}}  + \ON )
\eeq
uniformly in $\epsnu\geq 0$ and $|\omega|\leq \delta$. 
\end{proposition}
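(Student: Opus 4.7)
My plan is to follow the proof template of Proposition \ref{lem:consp}, adapted to the attractor case below the threshold $s=-\12$, where the characteristic feature is the appearance of a boundary term microsupported in the basin of $L^+$ away from $\Lambda^+$.

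By a microlocal partition of unity and the elliptic estimate outside $p^{-1}([-\delta,\delta])$, it suffices to establish \eqref{eq:lem2} when $\wf'(A)$ is contained in a small conic neighborhood of a single point $q \in \Phi^+(L^+)$. If $q \in \Phi^+(L^+) \setminus \Lambda^+$, one may choose $B$ elliptic at $q$ and the bound follows from the elliptic estimate. The interesting case is $q \in \Lambda^+$. For such $q$, Lemma \ref{lembetterk} and Lemma \ref{lem:B} in the attractor case provide $k \in S^1_{\rm h}(T^*M)$ with $k>0$ on a conic neighborhood of $\Lambda^+$ containing $q$ and $\ham_p k > \beta$ on $\Phi^+(L^+)$, and $c \in S^0(T^*M)$ with $c(q)>0$, $c=0$ near $\{k=0\}$ and outside the basin, and $\ham_p c \geq 0$ on $\Phi^+(L^+)$. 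By the explicit $\chi_\epsilon$-construction of $c$ in Lemma \ref{lem:B} together with the homogeneity of $k$, we have the crucial inclusion $\supp(\ham_p c) \cap \Sigma \subset \Phi^+(L^+) \setminus \Lambda^+$ (since $c$ is constant on a conic neighborhood of $\Lambda^+$). Quantize $K \in \Psi^1(M)$ self-adjoint with $K \geq 0$ and principal symbol $k$ on a neighborhood of $T^*M \setminus \{c=0\}$.

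Next, fix $M>0$ large and $\chi \in C^\infty(\rr;[0,1])$ with $\chi \equiv 0$ on $(-\infty,M]$, $\chi \equiv 1$ on $[M+1,\infty)$, $\chi' \geq 0$; set
\[
g(\lambda) = -\chi(\lambda)\int_\lambda^\infty \bra\tau\ket^{2s}\,d\tau.
\]
For $s<-\12$ the integral converges, $g \in S^{2s+1}(\rr)$, $g \leq 0$, and $g'(\lambda)=\chi(\lambda)\bra\lambda\ket^{2s}-\chi'(\lambda)\int_\lambda^\infty\bra\tau\ket^{2s}d\tau$; by taking $M$ large (and if needed smoothing $g$) one ensures $g'\geq 0$ microlocally on $\supp c$. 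Let $B_2\in\Psi^0(M)$ be a quantization of $c^{\12}$ and set $G_2 \in \Psi^s(M)$, $G_1 \in \Psi^{s+1}(M)$ by microlocal square-root and factorization constructions, with principal symbols
\[
\sigma_\pr(G_2^*G_2) = \beta c g'(k), \qquad \sigma_\pr(G_1G_2)=c g(k),
\]
so that $G_1 G_2 \leq 0$ modulo lower order. The principal symbol of the commutator is
\[
\sigma_\pr\bigl([P,\i G_1G_2]-G_2^*G_2\bigr) = (\ham_p c)g(k) + cg'(k)(\ham_p k-\beta).
\]
On $\Phi^+(L^+)$ the second term is $\geq 0$; the first is $\leq 0$ but supported in $\supp(\ham_p c)\cap\{k\geq M\} \subset \Phi^+(L^+) \setminus \Lambda^+$. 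Choosing $B \in \Psi^0(M)$ elliptic on this set with $\wf'(B)\cap\Sigma \subset \Phi^+(L^+) \setminus \Lambda^+$, Lemma \ref{lem:main} applies with $m=2s+1$, yielding
\[
\|G_2 u\| \leq C\bigl(\|B u\|_s + \|(P-\omega-\i\epsnu Q)u\|_{s+1} + \|B_1 u\|_{s-\12}+\ON\bigr).
\]
The elliptic estimate bounds $\|A u\|_s$ by $\|G_2 u\|$ up to a $\|u\|_{-N}$-error, and the $\|B_1 u\|_{s-\12}$ term is removed exactly as in the final step of Proposition \ref{lem:consp}: by Proposition \ref{propagation} applied along backward trajectories from $\wf'(B_1)$ near $\Lambda^+$, which enter $\Phi^+(L^+) \setminus \Lambda^+$, one estimates $\|B_1 u\|_{s-\12}$ by $\|\tilde B u\|_{s-\12}\leq C\|\tilde B u\|_s$ for some $\tilde B$ supported in the permitted region, and $\tilde B$ can be absorbed into $B$ after enlargement.

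The main obstacle is the design of the commutant in the last step: the bad-sign contribution of the commutator symbol must be placed in the permitted region $\Phi^+(L^+) \setminus \Lambda^+$, which relies on $\supp(\ham_p c)\cap\Sigma$ avoiding $\Lambda^+$. A secondary subtlety is controlling the sign of $g'$ in the transition zone of $\chi$ so that $G_2^*G_2\geq 0$ at the principal level; this forces $M$ to be chosen large (or $g$ to be slightly smoothed) and is where the threshold condition $s<-\12$ intervenes via the convergence of $\int^\infty\bra\tau\ket^{2s}d\tau$.
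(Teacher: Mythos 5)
Your overall route mirrors the paper's: reuse the escape-function/commutant construction of Proposition~\ref{lem:consp}, take $K\geq 0$ in the attractor case, observe that $\ham_p c\geq 0$ forces the sign of $(\ham_p c)\,g(k)$ to be bad, and localize the bad contribution away from $\Lambda^+$ in the support of $\ham_p c$ to produce the $\|Bu\|_s$ term with $\wf'(B)\cap\Sigma\subset\Phi^+(L^+)\setminus\Lambda^+$. That is indeed the content of the paper's (very terse) proof, and your identification of $m=2s+1$ and of the support of the bad term is exactly right.

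The one genuine problem is your modified commutant weight. There is no need for the exterior cutoff $\chi$ in
$g(\lambda)=-\chi(\lambda)\int_\lambda^\infty\langle\tau\rangle^{2s}\,d\tau$: since $K\geq 0$, only the behavior of $g$ on $\rr_+$ enters, and the paper's $g(\lambda)=-\int_\lambda^\infty\langle\tau\rangle^{2s}\,d\tau$ already lies in $S^{1+2s}(\rr_+)$, which gives $g(K)\in\Psi^{1+2s}(M)$ and $G_1\in\Psi^{1+s}(M)$. Your exterior cutoff creates a real sign defect:
$g'(\lambda)=\chi(\lambda)\langle\lambda\rangle^{2s}-\chi'(\lambda)\int_\lambda^\infty\langle\tau\rangle^{2s}\,d\tau$
need not be nonnegative in the transition zone, and taking $M$ large does not repair this --- the second (negative) term scales like $M^{2s+1}$ while the first can vanish at the left edge of the ramp, so there is always a subinterval where $g'<0$. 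With $g'$ of indefinite sign, the factorization $G_2=\beta^{1/2}(g'(K))^{1/2}B_2$ is not defined, so the argument as written breaks. The correct repair, if one insists on a cutoff, is to place it \emph{inside} the integral (as in case b) of Proposition~\ref{lem:consp}), i.e.\ $g(\lambda)=-\int_\lambda^\infty\tilde\chi(\tau)\langle\tau\rangle^{2s}\,d\tau$, which gives $g'=\tilde\chi\,\langle\cdot\rangle^{2s}\geq 0$ by construction; but simpler still is to drop the cutoff entirely, as the paper does.

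A smaller point: your elaboration of removing the $\|B_1 u\|_{s-1/2}$ term via backward propagation into $\Phi^+(L^+)\setminus\Lambda^+$ is not quite right, since $\Lambda^+$ is $\Phi$-invariant and backward trajectories through points of $\wf'(B_1)\cap\Lambda^+$ never leave $\Lambda^+$. The paper's mechanism is to estimate $\|B_1u\|_{s-1/2}$ by $\|Au\|_{s-1/2}$ and absorb into the left side by interpolation in the Sobolev index, rather than to feed the boundary term into $B$; that distinction matters precisely on $\Lambda^+$.
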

\proof We can repeat the proof of Lemma \ref{lem:consp}, with the difference that $K\geq 0$. This entails that $G_1\in\Psi^m(M)$ with $m=1+2s$. Note that \eqref{eq:specialK} is no longer valid, which is why only the case $s<- \12$ is considered.

A further difference is that now  $\ham_p c\geq 0$, and so $(\ham_p c) g(k)\geq 0$ only where $\ham_p c=0$, which  in particular holds true outside a neighborhood of $\{k=0\}$. This has the consequence that when applying Lemma \ref{lem:main} we obtain an analogue of   
\eqref{eq:tempa} with $B$ which is elliptic on larger set, hence the extra  $\|B u\|_{{s}}$ term in \eqref{eq:lem2}. \qed

\subsection{Global estimates}\label{ss:global}

Under an extra non-trapping assumption we can combine Pro\-positions \ref{lem:consp} and \ref{lem:consp2} to get a global estimate (in the same way as radial estimates are combined with propagation of singularities in \cite{DZ} and references therein). Following  \cite{CdV} we make the following definition. 
\begin{definition}\label{def:simple} We say that $P$ has \emph{simple structure} if there exists  a weakly hyperbolic attractor $L^+_0\subset  \p \bSig_0$  and  a weakly hyperbolic repulsor $L^-_0\subset \p \bSig_0$ such that  $\Sigma_0 = \Phi^+(L^+_0)\cup \Phi^-(L^-_0)$ and
\beq\label{eq:ss}
\Phi^+(L^+_0)\setminus \Lambda^+_0 = \Phi^-(L^-_0)\setminus \Lambda^-_0.
\eeq
\end{definition}
 
 It is shown in  \cite{CdV} that the simple structure condition is equivalent to the existence of a global escape function on $\Sigma_0$.  As observed in \cite[Rem.~3.2]{CdV} in that the latter statement is then also valid for neighboring frequencies $\omega\in [-\delta,\delta]$ with $\delta>0$ small enough. Hence  $P-\omega$ has also simple structure for $|\omega|\leq \delta$,  and with the notation of \eqref{eqlpm}, we have
 $$
p^{-1}([-\delta,\delta])= \Phi^+(L^+)\cup \Phi^-(L^-), \quad
 \Phi^+(L^+)\setminus \Lambda^+ = \Phi^-(L^-)\setminus \Lambda^-.
 $$

\begin{proposition}\label{prelap}  Let $s<-\12$. If $P$ has simple structure  then for all $N$ and $u\in\cf(M)$,
\beq\label{eq:lap1}
\|   u \|_{s} \leq  C(  \| (P-\omega-\i \epsnu Q) u \|_{{-s}}   + \ON) 
\eeq
uniformly in $\epsnu\geq 0$ and $|\omega|\leq \delta$ for sufficiently small $\delta$. 
\end{proposition}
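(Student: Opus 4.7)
The plan is to assemble the local estimates of Propositions \ref{lem:consp}, \ref{lem:consp2}, and the elliptic estimate into a global bound via a microlocal partition of unity adapted to the simple structure decomposition of the characteristic set. Concretely, after possibly shrinking $\delta>0$ one has $p^{-1}([-\delta,\delta])=\Phi^+(L^+)\cup\Phi^-(L^-)$ with $\Lambda^+\cap\Lambda^-=\emptyset$ and $\Phi^+(L^+)\setminus\Lambda^+=\Phi^-(L^-)\setminus\Lambda^-$. Accordingly I would pick a decomposition $\one=A_0+A_-+A_+$ modulo $\Psi^{-\infty}(M)$ with $\wf'(A_0)\cap p^{-1}([-\delta,\delta])=\emptyset$, $\wf'(A_-)$ in a small conic neighborhood of $\Phi^-(L^-)$, and $\wf'(A_+)$ in a small conic neighborhood of $\Lambda^+$ disjoint from $\Lambda^-$; such a decomposition exists because the three regions above cover $p^{-1}([-\delta,\delta])$.

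For $\|A_0 u\|_s$ one uses the elliptic estimate for $P-\omega$ (elliptic of order $0$ off $p^{-1}([-\delta,\delta])$): since $|p-\omega-\i\epsnu q_2|\geq|p-\omega|\geq c>0$ on $\wf'(A_0)$ uniformly in $\epsnu\geq 0$, one obtains $\|A_0 u\|_s\leq C(\|(P-\omega-\i\epsnu Q)u\|_s+\|u\|_{-N})$, and $\|\cdot\|_s\leq C\|\cdot\|_{-s}$ for $s<-\12$ brings this into the desired form. For $\|A_- u\|_s$, Proposition \ref{lem:consp} applies directly with $m=0$ (valid since $s<-\12$) and yields $\|A_- u\|_s\leq C(\|(P-\omega-\i\epsnu Q)u\|_{-s}+\|u\|_{-N})$.

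The essential step is $\|A_+ u\|_s$. Proposition \ref{lem:consp2} provides some $B\in\Psi^0(M)$ with $\wf'(B)\cap p^{-1}([-\delta,\delta])\subset\Phi^+(L^+)\setminus\Lambda^+$ such that
\[
\|A_+ u\|_s\leq C\bigl(\|Bu\|_s+\|(P-\omega-\i\epsnu Q)u\|_{s+1}+\|u\|_{-N}\bigr).
\]
The crucial observation is the simple structure identity $\Phi^+(L^+)\setminus\Lambda^+=\Phi^-(L^-)\setminus\Lambda^-\subset\Phi^-(L^-)$, which means that $B$ itself satisfies the hypothesis of Proposition \ref{lem:consp}; a second application of that proposition gives $\|Bu\|_s\leq C(\|(P-\omega-\i\epsnu Q)u\|_{-s}+\|u\|_{-N})$. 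Since also $\|\cdot\|_{s+1}\leq C\|\cdot\|_{-s}$ when $s<-\12$, the bound on $\|A_+ u\|_s$ takes the required form.

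Summing the three contributions and absorbing the smoothing remainder $(\one-A_0-A_--A_+)u$ into the $\|u\|_{-N}$ term yields \eqref{eq:lap1}, with constants uniform in $\epsnu\geq 0$ and $|\omega|\leq\delta$. The main conceptual obstacle is precisely the recognition that the auxiliary operator $B$ emerging from the attractor estimate falls within the scope of the repulsor estimate, which is exactly what simple structure encodes; once this is noted the argument is essentially bookkeeping, and no separate propagation-of-singularities step is required, since Proposition \ref{lem:consp} already handles the entire basin $\Phi^-(L^-)$ in one stroke.
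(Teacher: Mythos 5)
Your proof is correct and follows essentially the same route as the paper: decompose $u$ microlocally, apply Proposition \ref{lem:consp2} to the attractor piece, use the simple-structure identity \eqref{eq:ss} to recognize that the resulting $B$-term has $\wf'(B)\cap p^{-1}([-\delta,\delta])\subset\Phi^-(L^-)$, and finish with two applications of Proposition \ref{lem:consp}. The only cosmetic difference is your separate $A_0$ term for the elliptic region, which is redundant since Propositions \ref{lem:consp} and \ref{lem:consp2} constrain $\wf'(A)$ only on $p^{-1}([-\delta,\delta])$ and handle the elliptic region internally via the partition-of-unity argument in their proofs.
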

\proof  Since by hypothesis $\Sigma = \Phi^+(L^+)\cup \Phi^-(L^-)$, we have $\|   u \|_{s} \leq C ( \| A_+ u\|_s + \| A_- u\|_s)$ for some  $A_\pm\in\Psi^0(M)$ satisfying $\wf'(A_\pm)\cap p^{-1}([-\delta,\delta])\subset\Phi^\pm(L^\pm)$. We first apply Proposition \ref{lem:consp2} with $A=A_+$. This gives an estimate with a $\|B u\|_s$ term on the r.h.s., where $\wf'(B)\cap p^{-1}([-\delta,\delta]) \subset \Phi^-(L^-)$ in view of \eqref{eq:ss}. Finally, we apply Proposition \ref{lem:consp}  twice, once with $A=A_-$, and once with $A=B$, and combine the resulting estimates to get \eqref{eq:lap1}.  \qed

\medskip

We can argue exactly as in \cite{DZ} and get a strict analogue of \cite[Lem.~3.1--3.3]{DZ}. Namely, one obtains that on $[-\delta,\delta]$ there is at most a finite number of eigenvalues, and the eigenvectors are necessarily $C^\infty$. Furthermore, if $0\notin \sp_{\rm pp}(P)$ then for $|\omega|\leq \delta$ and $f\in \cf(M)$, the limit 
\[
u_+\defeq \lim_{\epsnu\to 0+} (P-\omega-\i\epsnu Q)^{-1}f 
\] 
exists in $H^{s}(M)$, $s<-\12$. In addition, $u_+$ is the unique solution to the equation $(P-\omega)u=f$ under the condition $\wf'(u)\subset \Lambda^+$.   Note that the choice of $Q$ plays no role, so we have in particular
\beq\label{eq:corrad}
\lim_{\epsnu\to 0+} (P-\omega-\i\epsnu Q)^{-1}f  = (P-\omega-\i 0)^{-1}f
\eeq
in $H^{-1/2-}(M)$, where $(P-\omega-\i 0)^{-1}f$ is the $\epsnu\to 0+$ limit of $(P-\omega-\i\epsnu)^{-1}f$.
 
 \medskip

In the sequel we will actually use  a variant of Proposition \ref{prelap} with $\epsnu$-dependent bound. 

\begin{proposition}\label{prop:epsi repul}
Let $s<-\12$ and $r\in\open{s,\frac{\ell}{2}}$. If $P$ has simple structure then for all $N$ and $u\in\cf(M)$,
	\beq\label{eq:lapimpr}
	\|   u \|_{r} \leq  C \epsnu^{-(s-r)/(2s-\ell)} (  \| (P-\omega-\i \epsnu Q) u \|_{{-s}}   + \ON) 
	\eeq
 uniformly in $\epsnu\geq 0$ and $|\omega|\leq \delta$ for sufficiently small $\delta>0$. 
\end{proposition}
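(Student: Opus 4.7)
The plan is to extract an $H^{\ell/2}$-estimate with $\epsnu^{-1/2}$ loss from the energy identity, and then interpolate between it and the $\epsnu$-uniform $H^s$-estimate of Proposition \ref{prelap}. A priori one might hope to mimic the proof of Proposition \ref{prelap} using Lemma \ref{lem:main2} in place of Lemma \ref{lem:main}, but the radial estimate into the attractor (Proposition \ref{lem:consp2}) is built with $m=1+2s$, so the microlocal upgrade would blow up as $\epsnu^{-(r-s)/(1+\ell)}$, which is worse than the target $\epsnu^{-(r-s)/(\ell-2s)}$ when $s<-\frac{1}{2}$; the global interpolation strategy bypasses this loss.

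Setting $f:=(P-\omega-i\epsnu Q)u$, Proposition \ref{prelap} first gives the uniform-in-$\epsnu$ bound $\|u\|_s \leq C(\|f\|_{-s}+\|u\|_{-N})$. Pairing the equation with $u$ and taking imaginary parts, the self-adjointness of $P$ and $Q$ yields the energy identity
\[
\epsnu\bra Qu,u\ket \;=\; -\Im\bra f, u\ket \;\leq\; \|f\|_{-s}\|u\|_s.
\]
Since $Q>0$ is elliptic of order $\ell$, the operator $Q^{1/2}$ is an elliptic pseudo-differential operator of order $\ell/2$, so the elliptic estimate gives $\bra Qu,u\ket = \|Q^{1/2}u\|^2 \geq c\|u\|_{\ell/2}^2 - C\|u\|_{-N}^2$. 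Combining these, substituting the uniform $H^s$-bound, and absorbing the cross term via Young's inequality delivers
\[
\|u\|_{\ell/2} \;\leq\; C\epsnu^{-1/2}(\|f\|_{-s}+\|u\|_{-N}).
\]

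Finally, for $r\in(s,\ell/2)$, Sobolev interpolation $\|u\|_r \leq \|u\|_s^\lambda \|u\|_{\ell/2}^{1-\lambda}$ with $1-\lambda = (r-s)/(\ell/2-s)$ combines the two bounds to produce \eqref{eq:lapimpr} with the exponent $-(1-\lambda)/2 = -(r-s)/(\ell-2s) = -(s-r)/(2s-\ell)$, as claimed. The main technical step is the elliptic lower bound on $\bra Qu,u\ket$, which is standard for positive elliptic pseudo-differential operators via functional calculus or sharp G\r{a}rding; the final interpolation is routine in the Sobolev scale naturally associated with $Q$.
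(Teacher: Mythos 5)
Your proof is correct, but it takes a genuinely different route from the paper. The paper re-runs the radial estimate machinery with Lemma \ref{lem:main2} in place of Lemma \ref{lem:main}, carrying an $\epsnu$-weighted microlocal Sobolev term through the positive-commutator argument and interpolating only at the end; you instead derive the $\epsnu$-weighted $H^{\ell/2}$ bound in one stroke from the \emph{global} energy identity $\epsnu\bra Qu,u\ket=-\Im\bra f,u\ket$, which combined with Proposition \ref{prelap} and Young's inequality gives $\|u\|_{\ell/2}\leq C\epsnu^{-1/2}(\|f\|_{-s}+\ON)$, and then a single Sobolev interpolation yields \eqref{eq:lapimpr} with the correct exponent. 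Your preliminary remark is also pertinent: in the attractor estimate (Proposition \ref{lem:consp2}) one has $m=1+2s$, so Lemma \ref{lem:main2} applied there only produces an $\epsnu$-weighted term at the level $H^{(1+2s+\ell)/2}$, which is strictly weaker than $H^{\ell/2}$ when $s<-\tfrac12$, and the corresponding rate $\epsnu^{-(r-s)/(1+\ell)}$ is worse than the target. The paper's instruction to ``postpone the argument of interpolation as much as possible'' is presumably intended to address this, but your global energy identity resolves it cleanly because the $\epsnu$-gain it furnishes is uniform over $\Sigma$ and is insensitive to the attractor/repulsor decomposition --- the dynamics enter only through the $\epsnu$-uniform $H^s$ bound of Proposition \ref{prelap}. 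Your argument is shorter, requires only ingredients already established in the paper (Proposition \ref{prelap}, self-adjointness of $P$ and $Q$, ellipticity of $Q^{1/2}\in\Psi^{\ell/2}(M)$ via Proposition \ref{prop:fc}, and Sobolev interpolation), and is arguably the more transparent route to this estimate.
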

\proof
 The proof is analogous to Proposition \ref{prelap}, with the difference that we use the  obvious modification of the radial estimates (Propositions \ref{lem:consp}--\ref{lem:consp}) resulting from applying Lemma \ref{lem:main2} instead of Lemma \ref{lem:main}. More precisely, we can show a variant of radial and propagation estimates with the l.h.s.~in the same form as \eqref{var2}, and postpone the argument of interpolation in Sobolev spaces as much as possible (in this way the argument for  removing the $\| B_1  u \|_{{s-1/2}}$ terms apply verbatim).
\qed

\medskip

To lighten the notation a bit we  focus on the case when the viscosity term $Q$ is of order $2$, i.e.~we assume $\ell=2$. By ellipticity and a standard square norm argument,  $(P- \omega-i \epsnu Q)^{-1}$ exists and again by ellipticity,  $(P- \omega-i \epsnu Q)^{-1}\in\Psi^{-2}(M)$. If we assume in addition that $0\notin \sp_{\rm pp}(P)$ then we can get rid of the smoothing error term in the uniform estimates by standard compact embedding arguments. In the sequel we will need the following  version.
 
 \begin{proposition}\label{prop:tes} Assume $\ell=2$, $P$ has simple structure and  $0\notin \sp_{\rm pp}(P)$. Then $(P-\omega - i \epsnu Q)^{-1}=O(\epsnu^{-1/6-})$ in $B(L^2(M),H^{-\12-}(M))$ and $(P-\omega - i \epsnu Q)^{-1}=O(\epsnu^{-1/3})$ in $B(H^{-\12-}(M))$, uniformly in $|\omega|\leq \delta$ for sufficiently small $\delta>0$. 
 \end{proposition}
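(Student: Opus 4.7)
The plan is to apply Proposition \ref{prop:epsi repul} to the adjoint resolvent $T_+ \defeq (P-\omega + i\epsnu Q)^{-1}$ with two well-chosen pairs $(r,s)$ and then to dualize in the $L^2$ pairing in order to transfer the resulting bounds to $T \defeq T_- \defeq (P-\omega-i\epsnu Q)^{-1}$, which is the operator of interest. The key preliminary observation is that Proposition \ref{prop:epsi repul} is proved via Lemma \ref{lem:main2} and ultimately via Lemma \ref{lem:main}, both of whose hypotheses and conclusions carry an explicit $\pm$ sign in $\pm G_1 G_2 \geq 0$ and in $(P-\omega \pm i\epsnu Q)u$; reproducing the arguments of Propositions \ref{lem:consp}, \ref{lem:consp2} and \ref{prelap} verbatim with $K \geq 0$ in place of $K \leq 0$ (so that the radial and propagation estimates are built at the attractor $L^+$ rather than at the repulsor $L^-$) yields the identical estimate of Proposition \ref{prop:epsi repul} with $(P-\omega-i\epsnu Q)$ replaced by $(P-\omega+i\epsnu Q)$, with the same exponent.

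Before exploiting the estimate, I would invoke $0 \notin \sp_{\rm pp}(P)$ to remove the smoothing remainder $\|u\|_{-N}$, as announced in the paragraph preceding the proposition: only finitely many eigenvalues of $P$ lie in $[-\delta,\delta]$, so after shrinking $\delta$ one may assume none do, after which a standard contradiction/compactness argument based on compact Sobolev embeddings absorbs $\|u\|_{-N}$ into the left-hand side. With this in place, I apply the $+$ version of Proposition \ref{prop:epsi repul} to $u = T_+ g$ with two parameter choices. First, with $r = 0$ and $s = -\12 - \alpha$ for small $\alpha > 0$:
\[
\|T_+ g\|_{L^2} \leq C\,\epsnu^{-(1/2+\alpha)/(3+2\alpha)} \|g\|_{1/2+\alpha},
\]
so $T_+\colon H^{1/2+\alpha} \to L^2$ has norm $O(\epsnu^{-1/6-})$ as $\alpha \to 0+$. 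Second, with $r = \12 + \alpha$ and $s = -\12 - \alpha$, which satisfies $r \in (s,1)$ for $\alpha < \12$:
\[
\|T_+ g\|_{1/2+\alpha} \leq C\,\epsnu^{-(1+2\alpha)/(3+2\alpha)} \|g\|_{1/2+\alpha},
\]
so $T_+\colon H^{1/2+\alpha} \to H^{1/2+\alpha}$ has norm $O(\epsnu^{-1/3-})$ as $\alpha \to 0+$.

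Finally, since $P^* = P$ and $Q^* = Q$ one has $T_+^* = T_-$ for the $L^2$-adjoint, and the $L^2$ pairing identifies $(H^s(M))^* \cong H^{-s}(M)$ isometrically, which is explicit from $\|u\|_s = \|Q^{s/2}u\|$ together with $Q > 0$. Because the operator norm of an adjoint coincides with that of the original operator, the two bounds on $T_+$ above dualize respectively to $T_-\colon L^2 \to H^{-1/2-}$ with norm $O(\epsnu^{-1/6-})$ and $T_-\colon H^{-1/2-} \to H^{-1/2-}$ with norm $O(\epsnu^{-1/3-})$, which is the content of the proposition. The only non-routine step is the absorption of the smoothing remainder $\|u\|_{-N}$ under $0 \notin \sp_{\rm pp}(P)$; everything else reduces to a judicious choice of parameters in Proposition \ref{prop:epsi repul} together with a standard Sobolev duality argument.
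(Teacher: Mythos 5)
Your overall strategy---bound the adjoint resolvent $T_+ \defeq (P-\omega+i\epsnu Q)^{-1}$ via the weighted radial estimate of Proposition \ref{prop:epsi repul} and then dualize in the $Q$-weighted $L^2$ pairing to transfer the bounds to $T_-\defeq(P-\omega-i\epsnu Q)^{-1}$---is the same as the paper's, and your parameter choices $(r,s)=(0,-\12-\alpha)$ and $(r,s)=(\12+\alpha,-\12-\alpha)$ produce exactly the two stated exponents (the paper only carries out the first choice explicitly and leaves the second implicit). Two steps deserve scrutiny, however.

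First, your ``key preliminary observation'' is not quite right. In the paper's construction, $G_1G_2 = B_2^* g(K)B_2$ with $g<0$ throughout, so $G_1G_2\leq 0$ irrespective of the sign of $K$: Proposition \ref{lem:consp2} already takes $K\geq 0$ and nonetheless lands in the $-$ branch of Lemma \ref{lem:main}. Simply swapping $K\leq 0$ for $K\geq 0$ does not activate the $+$ option of \eqref{cond1}; one would need to flip $g$ too, at which point $g'\leq 0$ and the construction of $G_2=(g'(K))^{1/2}B_2$ breaks. The clean route, and the one the paper takes, is to apply Proposition \ref{prop:epsi repul} with $P$ replaced by $-P$ (and $\omega$ by $-\omega$), which exchanges $L^+\leftrightarrow L^-$ and yields precisely the desired estimate for $P-\omega+i\epsnu Q$. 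Your conclusion is therefore correct but the stated justification is not.

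Second, the compactness removal of $\ON$ cannot be performed directly on the $\epsnu$-weighted estimate \eqref{eq:lapimpr}. The usual contradiction argument fails there: if $(u_n,\omega_n,\epsnu_n)$ is a minimizing sequence with $\epsnu_n\to 0$, the factor $C_{\epsnu_n}\to\infty$ permits $\|u_n\|_{-N}\to 0$ while $C_{\epsnu_n}\|u_n\|_{-N}\geq 1-1/n$, so the limit $u_\infty$ may vanish and no contradiction arises. The paper instead keeps $\ON$ with $N=\12+$ and bounds $\|T_+f\|_{-1/2-}$ uniformly via the existence of the zero-viscosity limit (the remark after Proposition \ref{prelap}, using $0\notin\sp_{\rm pp}(P)$). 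Your argument can be salvaged indirectly---first remove $\ON$ from the \emph{unweighted} estimate of Proposition \ref{prelap} by compactness, then substitute the result with $N=-s$ back into \eqref{eq:lapimpr} to absorb the error term---but that intermediate step needs to be made explicit; as written, the removal step is the one genuine gap in the argument.
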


\begin{proof}
	 By taking $\ell=2$ and $r=0^+$ in \eqref{eq:lapimpr} with $P$ replaced by $-P$ (this merely exchanges attractors and repulsors)  we obtain the uniform estimate 
	\beq\label{eq:radial}
	\|   u \|\leq  C \epsnu^{-1/6-} (  \| (P-\omega+\i \epsnu Q) u \|_{1/2+}   + \ON),
	\eeq
	Let $u_\nu=\nu^{1/6+}(P-\omega+i\nu Q)^{-1}f$ with $f\in L^2(M)$. Then, by taking $N=1/2+$ in \eqref{eq:radial}, we obtain
	\beq \label{eq:apply radial}
	\|u_\nu\|\leq C\|f\|+C\|(P-\omega+i\nu Q)^{-1}f\|_{-1/2-}.
	\eeq
	Next, by the remark following Proposition \ref{prelap}, the second term in the RHS of \eqref{eq:apply radial} is bounded. Thus the family $\epsnu^{1/6+} (P-\omega-i\epsnu Q)^{-1}$ is bounded in the strong operator topology of $B(L^2(M),H^{-\12-}(M))$. By duality, $(P-\omega - i \epsnu Q)^{-1}=O(\epsnu^{-1/6-})$ in $B(L^2(M),H^{-\12-}(M))$. 
\end{proof}

\section{Spectral analysis in the presence of viscosity} \label{s:spectral}

\subsection{Spectrum of $P_\epsnu$}

Recall that $Q\in \Psi^{\ell}(M)$, $\ell\geq 0$, $Q$ is elliptic and $Q>0$. From now on  we assume $\ell=2$.

In the following we denote for all $\epsnu>0$, 
$$
P_\epsnu:=P-i\epsnu Q, \quad \Dom(P_\epsnu) = H^2(M).
$$
 We observe that  $-iP_\epsnu=-\epsnu Q - i P$ is the generator of a strongly continuous one-parameter semigroup of contractions (as it is a bounded perturbation  of $-\epsnu Q$), which we denote  somewhat abusively by  $\left(e^{-itP_\epsnu}\right)_{t\in\rr_+}$. 

More precisely,  by an elementary numerical range argument one gets that
\beq
	\sp(P_\epsnu)\subset  \lbrace \lambda \in\cc  \st \module{\Re \lambda}\leq \|P\|_{B(L^2)},  \ \Im \lambda\leq - \epsnu\rbrace
\eeq
and then for $\lambda\notin \sp(P_\epsnu)$, i.e.~if $\module{\Re \lambda}>\|P\|_{B(L^2)}$ or $\Im \lambda >-\epsnu$
\beq\label{eq:estimeresolvante}
	\|(P_\epsnu-\lambda)^{-1}\|_{B(L^2)}\leq \min\left(  \frac{1}{\module{\module{\Re \lambda}-\|P\|_{B(L^2)}}}, \frac{1}{\module{\Im \lambda + \epsnu}}  \right). 
\eeq
We also note the following Sobolev space bounds on the real line.

\begin{lemma}\label{lem:inv} For all $\epsnu>0$, $(P_\epsnu-\omega)^{-1}\in\Psi^{-2}(M)$. Furthermore:
\beq\label{la1}
\| (P_\epsnu-\omega)^{-1} \|_{B(H^{-1},H^1)}\leq 1/\epsnu
\eeq 
uniformly in $\omega\in\rr$, and 
\beq\label{la2}
\norm{ (P_\epsnu-\omega)^{-1}  \left(P-\omega-i\epsnu\right)}_{B(H^{1},H^{1})}\leq C
\eeq
uniformly in $\epsnu>0$, $\omega\in\rr$.
\end{lemma}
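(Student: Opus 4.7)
The three assertions are to be handled in order: the first by ellipticity, and the remaining two by a simple dissipation identity, with the final bound a consequence of an algebraic manipulation that exposes a cancellation between the $\epsnu$ loss in \eqref{la1} and an $\epsnu$ gain from the expansion of $P-\omega-i\epsnu$.

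The $\Psi^{-2}(M)$ mapping property follows because $Q\in\Psi^2(M)$ is elliptic with principal symbol $q>0$, so $P_\epsnu-\omega=P-\omega-i\epsnu Q$ is elliptic of order $2$ for every $\epsnu>0$, with principal symbol $-i\epsnu q$. Its invertibility on $L^2(M)$ has already been obtained from the numerical range bound \eqref{eq:estimeresolvante}, so a standard parametrix construction places $(P_\epsnu-\omega)^{-1}$ in $\Psi^{-2}(M)$.

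For \eqref{la1}, the plan is to set $u=(P_\epsnu-\omega)^{-1}f\in H^1(M)$ for $f\in H^{-1}(M)$ and pair $(P-\omega-i\epsnu Q)u=f$ with $u$ in $L^2(M)$. Taking imaginary parts and using $P=P^*$ and $\omega\in\rr$ kills the contributions of $P-\omega$, giving
\[
\epsnu\bra Qu,u\ket=-\Im\bra f,u\ket\leq |\bra Q^{-\12}f,Q^{\12}u\ket|\leq \|f\|_{-1}\|u\|_1.
\]
Since $\bra Qu,u\ket=\|Q^{\12}u\|^2=\|u\|_1^2$, dividing by $\|u\|_1$ yields $\|u\|_1\leq\epsnu^{-1}\|f\|_{-1}$ uniformly in $\omega$.

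For \eqref{la2}, the key is the algebraic identity
\[
(P_\epsnu-\omega)^{-1}(P-\omega-i\epsnu)=(P_\epsnu-\omega)^{-1}\bigl[(P_\epsnu-\omega)+i\epsnu(Q-\one)\bigr]=\one+i\epsnu(P_\epsnu-\omega)^{-1}(Q-\one).
\]
The identity part is trivially bounded on $H^1(M)$. For the remainder, I would verify that $Q-\one\colon H^1(M)\to H^{-1}(M)$ is bounded by a constant independent of $\epsnu$: this is equivalent to the $L^2$-boundedness of $Q^{-\12}(Q-\one)Q^{-\12}=\one-Q^{-1}$, which is clear since $Q^{-1}\in\Psi^{-2}(M)$. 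Composing with the $\epsnu^{-1}$ bound from \eqref{la1} produces a bound of order $\epsnu\cdot\epsnu^{-1}\cdot C=C$, independent of $\epsnu$ and $\omega$. I do not expect any genuine obstacle here; the only subtlety is recognising that the $\epsnu$ prefactor in the correction $i\epsnu(Q-\one)$ exactly compensates the $\epsnu^{-1}$ loss of the resolvent on $H^{-1}\to H^1$, which is precisely what the lemma is designed to capture.
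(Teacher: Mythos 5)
Your proof is correct and structurally the same as the paper's: ellipticity for the $\Psi^{-2}(M)$ claim, and the algebraic identity $(P_\epsnu-\omega)^{-1}(P-\omega-i\epsnu)=\one+i\epsnu(P_\epsnu-\omega)^{-1}(Q-\one)$ combined with \eqref{la1} for the third claim. The only cosmetic difference is in \eqref{la1}: the paper conjugates by $Q^{1/2}$ to recognize $Q^{1/2}(P_\epsnu-\omega)^{-1}Q^{1/2}=(Q^{-1/2}(P-\omega)Q^{-1/2}-i\epsnu)^{-1}$ as the resolvent of a bounded self-adjoint operator and invokes the standard $1/\epsnu$ bound, whereas you re-derive that same bound directly via the energy identity $\epsnu\|u\|_1^2=-\Im\bra f,u\ket\leq\|f\|_{-1}\|u\|_1$; these are the same estimate in different packaging.
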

\proof  The operator $P_\epsnu^{-1}$ is the inverse of an elliptic operator in $\Psi^2(M)$ so it belongs to $\Psi^{-2}(M)$.  To prove \eqref{la1} it suffices to observe that 
\[
Q^{\12} (P-\omega-\i \epsnu Q)^{-1} Q^{\12}=(Q^{-\12}(P-\omega) Q^{-\12}-\i \epsnu)^{-1},
\] 
which is the resolvent of a bounded, self-adjoint operator. To see that \eqref{la2} holds true, we write 
\[
(P_\epsnu-\omega)^{-1}  (P-\omega-\i \epsnu)= \one +\i \epsnu (P_\epsnu-\omega)^{-1} (Q-\one) \in B(H^1(M))
\]
where the r.h.s.~is uniformly bounded by \eqref{la1}.
\qed

\begin{lemma} \label{lem:bidon} Suppose $\varphi\in S^0(\rr)$ and  $\lambda\in \rr \setminus \supp \varphi$. Let $\pi_\lambda\in B(H^1(M))$ be the orthogonal projection to $\Ker(P-\lambda)$ in the sense of $H^1(M)$. 
Then 
$$
(P_\epsnu-\lambda)^{-1} \varphi (P) \to (\one -\pi_\lambda)(P-\lambda)^{-1} \varphi (P) 
$$
as $\epsnu \to 0^+$ in the strong operator topology of $H^1(M)$.
\end{lemma}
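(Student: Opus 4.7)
The plan is to write $(P_\epsnu-\lambda)^{-1}\varphi(P)$ as an explicit functional-calculus term plus an $\epsnu$-correction, and then identify the strong $H^1$-limit of the correction via the spectral theorem applied to a self-adjoint conjugate of $P-\lambda$. First, since $\lambda\notin\supp\varphi$, the function $\varphi_1(t):=(t-\lambda)^{-1}\varphi(t)$ belongs to $S^0(\rr)$, so $\varphi_1(P)$ is bounded on $H^1(M)$ and coincides with $(P-\lambda)^{-1}\varphi(P)$ in the functional-calculus sense. Expanding $P_\epsnu-\lambda=(P-\lambda)-i\epsnu Q$ in the identity $(P-\lambda)\varphi_1(P)=\varphi(P)$ and applying $(P_\epsnu-\lambda)^{-1}$ on the left yields
$$(P_\epsnu-\lambda)^{-1}\varphi(P) \;=\; \varphi_1(P) + i\epsnu(P_\epsnu-\lambda)^{-1}Q\varphi_1(P),$$
reducing the lemma to showing that the second summand converges in $H^1(M)$ to $-\pi_\lambda\varphi_1(P)u$ for every $u\in H^1(M)$.

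The second step is to conjugate by the isometry $Q^{1/2}:H^1(M)\to L^2(M)$ afforded by the norm convention $\|\cdot\|_1=\|Q^{1/2}\cdot\|$, which transports the problem to $L^2(M)$. Introduce $S:=Q^{-1/2}(P-\lambda)Q^{-1/2}$, bounded and self-adjoint on $L^2(M)$ since $P^*=P$, $Q^*=Q$ and $\lambda\in\rr$. The relation $S-i\epsnu = Q^{-1/2}(P_\epsnu-\lambda)Q^{-1/2}$ then gives the key identity
$$Q^{1/2}(P_\epsnu-\lambda)^{-1}Q^{1/2} \;=\; (S-i\epsnu)^{-1},$$
so that, with $v:=\varphi_1(P)u\in H^1(M)$, the conjugated correction becomes $i\epsnu(S-i\epsnu)^{-1}Q^{1/2}v$. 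Because the scalar $i\epsnu(s-i\epsnu)^{-1}$ tends pointwise to $-\mathbf{1}_{\{s=0\}}(s)$ as $\epsnu\to 0^+$ while being bounded by $1$, dominated convergence in the spectral resolution of $S$ yields $i\epsnu(S-i\epsnu)^{-1}\to -\Pi$ strongly on $L^2(M)$, where $\Pi$ is the $L^2$-orthogonal projection onto $\Ker S$.

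The last step is to identify this limit back in $H^1(M)$. A direct computation gives $\Ker S = Q^{1/2}\Ker(P-\lambda)$, and the isometry $Q^{-1/2}:L^2(M)\to H^1(M)$ carries $\Ker S$ onto $\Ker(P-\lambda)$; by uniqueness of orthogonal projections, the conjugate $Q^{-1/2}\Pi Q^{1/2}:H^1(M)\to H^1(M)$ is therefore the $H^1$-orthogonal projection onto $\Ker(P-\lambda)$, i.e.~exactly $\pi_\lambda$. Substituting gives $i\epsnu(P_\epsnu-\lambda)^{-1}Q\varphi_1(P)u \to -\pi_\lambda\varphi_1(P)u$ in $H^1(M)$, whence $(P_\epsnu-\lambda)^{-1}\varphi(P)u \to (\one-\pi_\lambda)(P-\lambda)^{-1}\varphi(P)u$ in $H^1(M)$, as required. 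The main obstacle, and the only genuinely nontrivial step, is this identification of $\pi_\lambda$ as the $Q^{\pm 1/2}$-conjugate of the $L^2$-projection $\Pi$: it depends crucially on the paper's norm convention making $Q^{\pm 1/2}$ isometries between $L^2(M)$ and $H^1(M)$, and on the fact that the $H^1$-inner product on $\Ker(P-\lambda)$ pulls back to the $L^2$-inner product on $Q^{1/2}\Ker(P-\lambda)$.
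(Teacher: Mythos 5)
Your proof is correct and takes essentially the same route as the paper. Both arguments rest on the identity $(P_\epsnu-\lambda)^{-1}(P-\lambda)=\one + i\epsnu\, Q^{-1/2}(\widetilde P_\lambda - i\epsnu)^{-1}Q^{1/2}$ (your $S$ is the paper's $\widetilde P_\lambda$), followed by the spectral-theorem computation $i\epsnu(S-i\epsnu)^{-1}\to -\one_{\{0\}}(S)$ strongly in $L^2$, the $Q^{\pm 1/2}$-conjugation to identify $\pi_\lambda = Q^{-1/2}\one_{\{0\}}(S)Q^{1/2}$, and the observation that $(P-\lambda)^{-1}\varphi(P)=\varphi_1(P)\in\Psi^0(M)$ (via the paper's Proposition \ref{prop:zero}); the only presentational difference is that you factor $\varphi(P)=(P-\lambda)\varphi_1(P)$ first rather than peeling off the factor $(P-\lambda)^{-1}\varphi(P)$ at the end, which is an algebraically equivalent rearrangement.
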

\begin{proof} Denote $\widetilde{P}_\lambda=Q^{-\12}(P-\lambda)Q^{-\12}$. We remark that $u_\lambda\in L^2(M)$ is in $\Ker \widetilde P_\lambda$ if and only if $Q^{-\12}  u_\lambda \in \Ker (P-\lambda)\cap H^1(M)$. We conclude
\beq\label{eq:pil}
\pi_\lambda=Q^{-\12} \one_{\{0\}}(\widetilde{P}_\lambda) Q^{\12}
\eeq
by comparing the range of both sides and checking self-adjointness of the r.h.s.~in $H^1(M)$ (above,  $\one_{\{0\}}(\widetilde{P}_\lambda)$ is understood in the $L^2(M)$ sense). 
 
By the same computations as in the proof of Lemma \ref{lem:inv} we can write 
$$
(P_\epsnu-\lambda)^{-1}  (P-\lambda)= (\one +\i \epsnu Q^{-\12} (\widetilde{P}_\lambda-i\epsnu)^{-1} Q^{\12}).
$$
The second summand equals
$$
\bea
 Q^{-\12} \i \epsnu (\widetilde{P}_\lambda-i\epsnu)^{-1}  Q^{\12} 
\eea
$$
where $i \epsnu (\widetilde{P}_\lambda-i\epsnu)^{-1}$ tends in the  $B(L^2(M))$ strong operator topology to the spectral projection $-\one_{\{0\}}(\widetilde{P}_\lambda)$   by functional calculus (where $\one_{\{0\}}$ is the characteristic function of $\{0\}$),  hence to $0$ since $\Ker \widetilde{P}_\lambda =\{ 0\}$. In consequence, 
$$
(P_\epsnu-\lambda)^{-1}  (P-\lambda) \to \one - Q^{-\12} \one_{\{0\}}(\widetilde{P}_\lambda) Q^{\12}  = \one - \pi_\lambda
$$
strongly as operators in $B(H^1(M))$. Furthermore $B=(P-\lambda)^{-1}\varphi(P)\in \Psi^0(M)$ by Proposition \ref{prop:zero}. We conclude
$$
\bea
(P_\epsnu-\lambda)^{-1}\varphi(P)&=\big((P_\epsnu-\lambda)^{-1}(P-\lambda)) (P-\lambda)^{-1}\varphi(P)  \fantom\to (1-\pi_\lambda)(P-\lambda)^{-1}\varphi(P)
\eea
$$ 
strongly.  
\end{proof}

\subsection{Spectral representation of the semi-group}

In the sequel we  will use the following contour integral representation: 
\begin{equation}\label{eq:srs}
	e^{-iP_\epsnu t}=-(2\pi i)^{-1}\int_\Gamma (P_\epsnu-z)^{-1}e^{-izt}\mathrm{d}z,\quad t >0,
\end{equation}
where
\begin{equation}\label{eq:Gamma}
\bea
\Gamma:=\Gamma_0\cup\Gamma_-\cup\Gamma_+&:=\left[-\|P\|_{B(L^2)}-\delta,\|P\|_{B(L^2)}+\delta\right]\\
	& \phantom{:=} \ \cup\left\lbrace-\|P\|_{B(L^2)}-\delta-r e^{i\beta},r\in\clopen{0,\infty}\right\rbrace\\
	&\phantom{:=} \ \cup\left\lbrace\|P\|_{B(L^2)}+\delta+re^{-i\beta},r\in\clopen{0,\infty}\right\rbrace
\eea\end{equation}
with $\delta>0$ small  and $\beta\in \open{  0,\pi/2}$. Note that $\Gamma$ encloses $\sp(P_\epsnu)$. As $z\mapsto (P_\epsnu-z)^{-1}$ is well-defined and bounded on $\Gamma_\epsnu$ and $|e^{-izt}|\leq |e^{(\Im z) t}|$ and $\Im z<0$ on $\Gamma_\pm$ so the integral is well-defined sense for all $t>0$. The formula can be shown easily e.g.~by an argument analogous to \cite[Thm.~1.7.7]{Pazy}.

\section{Multiscale analysis of the solution to the forced equation}

\label{s:final} 

\subsection{Proof of main result} Recall that we want to study the $t\to +\infty$ behaviour of the solution of the initial value problem  
\beq\label{eq:IVP}
\begin{cases}
		i\partial _tu_\epsnu-(P-i\epsnu Q) u_\epsnu=f,\\
		u_\epsnu(0)=0,
\end{cases}
\eeq
with  forcing $f\in C^\infty(M)$ in the low viscosity regime $\epsnu\to 0+$. Note that if we change  $P$ to $P-\omega$ this amounts to merely changing  the forcing term $f$ to $e^{-i\omega t}f$. By Duhamel formula, we have  
\beq\label{eq:ut}
	u_\epsnu(t)={-i}\int_0^t e^{-isP_\epsnu}f\mathrm{d}s  = P_{\epsnu}^{-1} (e^{-it P_\epsnu}-1) f.
\eeq
Note that it is relatively straightforward to show using the last formula in \eqref{eq:ut} that for each $\epsnu>0$, $\| u_\epsnu(t)\|$ is bounded, but the dependence on $\epsnu$ is pretty bad, namely
$$
\| u_\epsnu(t)\| \leq\epsnu^{-1} C \|f \|, 
$$
where the $\epsnu^{-1}$ factor comes from the estimate $\| P_{\epsnu}^{-1}\|_{B(L^2)} \leq \epsnu^{-1}$ . 

 Moreover, we have some rough results on the convergence of the solution if we fix $\nu$ or $t$.  Namely, using \cite[Thm.~3.30]{Davies1981} we can show that 
	\beq
		\|e^{-itP\epsnu}f\|\leq e^{-t\epsnu}\|f\|,
	\eeq
	and this shows that for any fixed $\epsnu>0$, \beq 
	\lim_{t\rightarrow \infty}u_{\epsnu}(t)=-P_{\epsnu}^{-1}f. 
	\eeq
	On the other hand, by combining \cite[Thm.~4.2]{Pazy} and Lebesgue's theorem in formula \eqref{eq:ut}, we get that for all $t$ in any compact interval 
	\beq 
		\lim_{\epsnu\rightarrow 0^+}u_\nu(t)=u_0(t),
	\eeq
	where $u_0$ is the solution of \eqref{eq:sve}. The more difficult question is however to usefully combine both limits in a suitable regime for $t$ and $\nu$. 

\begin{theorem} \label{thm:main}
	Assume that $P$  has simple  structure and $0\notin \sp_{\rm pp}(P)$. Then for any $f\in C^\infty(M)$, the solution  of \eqref{eq:IVP} decomposes as  \beq\label{eq:decomp}
	u_\epsnu(t)=u_{\epsnu,\infty}+b_\epsnu(t)+e_{\epsnu}(t),	
	\eeq
	where $u_{\epsnu,\infty}= -P_\epsnu^{-1} f$ {converges to $-(P-i0^+)^{-1}f$ in} $H^{-\frac{1}{2}-}(M)$, $ \| b_\epsnu(t)\|\leq C\|f\|_1$ 	uniformly in  $t> 0$, $\epsnu> 0$,  and for all $\delta_1>0$ there exists $\delta_2>0$ such that
	\beq\label{eq:decomp2}
	 \quad \|e_{\epsnu}(t)\|_{-1/2-}\leq  C t^{-\delta_2}  \| f\|, 
	\eeq
		uniformly for  $t \sim \nu^{-\frac{1}{3}-\delta_1}$.  
\end{theorem}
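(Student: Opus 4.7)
The plan is to combine the Duhamel formula \eqref{eq:ut} with the spectral representation \eqref{eq:srs} of the semigroup and the radial estimates from Proposition~\ref{prop:tes}. From \eqref{eq:ut} we already have $u_\epsnu(t)=u_{\epsnu,\infty}+P_\epsnu^{-1}e^{-itP_\epsnu}f$ with $u_{\epsnu,\infty}\defeq-P_\epsnu^{-1}f$, and the convergence $u_{\epsnu,\infty}\to -(P-i0)^{-1}f$ in $H^{-\12-}(M)$ is immediate from \eqref{eq:corrad} at $\omega=0$. To produce the decomposition of the remainder, fix a spectral cutoff $\psi\in C^\infty_{\rm c}((-\delta,\delta))$ with $\psi\equiv 1$ on a neighborhood of $0$ and, exploiting that $P_\epsnu^{-1}$ commutes with $e^{-itP_\epsnu}$, set
$$
b_\epsnu(t)\defeq e^{-itP_\epsnu}P_\epsnu^{-1}(\one-\psi(P))f,\qquad e_\epsnu(t)\defeq e^{-itP_\epsnu}P_\epsnu^{-1}\psi(P)f.
$$

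For $b_\epsnu$: since $0\notin\sp_{\rm pp}(P)$, Lemma~\ref{lem:bidon} applied at $\lambda=0$ with $\varphi=\one-\psi$ (so that $0\notin\supp\varphi$) yields the strong limit $P_\epsnu^{-1}(\one-\psi(P))\to P^{-1}(\one-\psi(P))$ in $B(H^1(M))$ as $\epsnu\to 0^+$; the uniform boundedness principle then gives $\|P_\epsnu^{-1}(\one-\psi(P))\|_{B(H^1)}\leq C$ uniformly in $\epsnu>0$. Combined with the $L^2$-contractivity of $e^{-itP_\epsnu}$ and the embedding $H^1(M)\hookrightarrow L^2(M)$, this produces $\|b_\epsnu(t)\|\leq \|P_\epsnu^{-1}(\one-\psi(P))f\|_1\leq C\|f\|_1$, uniformly in $t>0$, $\epsnu>0$, as required.

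For $e_\epsnu$: inserting \eqref{eq:srs} one gets
$$
e_\epsnu(t)=-\frac{1}{2\pi i}\int_\Gamma P_\epsnu^{-1}(P_\epsnu-z)^{-1}\psi(P)f\,e^{-izt}\,dz,
$$
which we split along $\Gamma=\Gamma_0\cup\Gamma_+\cup\Gamma_-$. On the rays $\Gamma_\pm$, $|e^{-izt}|=e^{-r\sin\beta\cdot t}$ decays exponentially in the arclength parameter $r$; combined with the resolvent bound \eqref{eq:estimeresolvante} and the estimate $\|P_\epsnu^{-1}\|_{B(H^{-\12-})}=O(\epsnu^{-1/3})$ from Proposition~\ref{prop:tes}, this yields a contribution of size $O(\epsnu^{-1/3}/t\,\|f\|)$ in $H^{-\12-}(M)$, which is $O(\epsnu^{\delta_1}\|f\|)$ for $t\sim\epsnu^{-1/3-\delta_1}$. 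On the near part of $\Gamma_0$, insert a further cutoff $\chi\in C^\infty_{\rm c}([-\delta,\delta])$ equal to $1$ close to $0$ and integrate by parts $N$ times in $z$ against $e^{-izt}$. Using $\partial_z^k(P_\epsnu-z)^{-1}=k!(P_\epsnu-z)^{-(k+1)}$ together with the uniform bound $\|(P_\epsnu-z)^{-1}\|_{B(H^{-\12-})}=O(\epsnu^{-1/3})$ from Proposition~\ref{prop:tes} for $|z|\leq\delta$, the resulting estimate is
$$
\|e_\epsnu(t)\|_{-\12-}\leq C_N\,t^{-N}\epsnu^{-(N+2)/3}\|f\|=C_N\,\epsnu^{N\delta_1-2/3}\|f\|
$$
for $t\sim\epsnu^{-1/3-\delta_1}$, which is bounded by $Ct^{-\delta_2}\|f\|$ with $\delta_2=(N\delta_1-2/3)/(1/3+\delta_1)>0$ as soon as $N>2/(3\delta_1)$.

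The main obstacle lies in the remaining portion of $\Gamma_0$, where $|z|$ exceeds the radial-estimate range and $z$ may sit in the continuous spectrum of $P$, so that no $\epsnu^{-1/3}$ bound on the resolvent is directly available (only the crude $\|(P_\epsnu-z)^{-1}\|_{B(L^2)}\leq\epsnu^{-1}$ from \eqref{eq:estimeresolvante}). The spectral localization of $\psi(P)f$ saves the day: writing $\psi(P)=(P-z)\tilde\psi_z(P)$ with $\tilde\psi_z(\lambda)\defeq\psi(\lambda)/(\lambda-z)$ (smooth in $\lambda$ and uniformly bounded in $z$ once $z$ is separated from $\supp\psi$), one derives the identity
$$
(P_\epsnu-z)^{-1}\psi(P)=\tilde\psi_z(P)+\i\epsnu(P_\epsnu-z)^{-1}Q\tilde\psi_z(P),
$$
in which $\tilde\psi_z(P)$ is controlled uniformly in $\epsnu$ by the self-adjoint functional calculus of $P$, while the explicit $\epsnu$-prefactor in the second term absorbs the $\epsnu^{-1}$ loss. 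A parallel integration-by-parts argument, noting that each $z$-derivative of $\tilde\psi_z(P)$ gains only bounded factors, then yields the same $t^{-\delta_2}$ decay for this far piece, completing the estimate on $e_\epsnu(t)$.
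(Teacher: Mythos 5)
Your decomposition differs from the paper's in a way that creates a genuine gap. You set
$b_\epsnu(t)=e^{-itP_\epsnu}P_\epsnu^{-1}(\one-\psi(P))f$ and $e_\epsnu(t)=e^{-itP_\epsnu}P_\epsnu^{-1}\psi(P)f$,
so your $e_\epsnu$ must absorb \emph{all} of $\Gamma$, including $\Gamma_\pm$ and the far portion of $\Gamma_0$.
The paper instead takes two nested cutoffs $\varphi\prec\chi$ and places the $\Gamma_\pm$ and far-$\Gamma_0$ pieces into $b_\epsnu$, where only a bound of the form $C\|f\|_1$ is required; only $I_{0,\chi}\varphi(P)f$ is left in $e_\epsnu$, and there the radial bound of Proposition~\ref{prop:tes} applies to every resolvent factor. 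Your treatment of $b_\epsnu$, the near $\Gamma_0$ region, and $\Gamma_\pm$ is essentially in line with the paper, and the second-resolvent-identity rewriting
$(P_\epsnu-z)^{-1}\psi(P)=\tilde\psi_z(P)+i\epsnu(P_\epsnu-z)^{-1}Q\tilde\psi_z(P)$
is an attractive alternative to the paper's argument (which takes $\epsnu\to 0^+$ via Lemma~\ref{lem:bidon} and then integrates by parts in the $\epsnu$-independent limit).

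The problem is the second term in that identity. First, since $Q\in\Psi^2(M)$, the quantity $\|\epsnu(P_\epsnu-z)^{-1}Q\tilde\psi_z(P)f\|$ is controlled by $\|f\|_1$, not $\|f\|$: one has $\|\epsnu(P_\epsnu-z)^{-1}\|_{B(H^{-1},H^1)}\leq 1$ from Lemma~\ref{lem:inv}, and $Q\tilde\psi_z(P)\colon H^1\to H^{-1}$, so the natural estimate lands on $\|f\|_1$. The theorem, however, asserts $\|e_\epsnu(t)\|_{-1/2-}\leq Ct^{-\delta_2}\|f\|$, so placing this piece in $e_\epsnu$ already forces you to weaken the statement. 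Second, and more seriously, the claim that ``each $z$-derivative of $\tilde\psi_z(P)$ gains only bounded factors'' does not extend to the whole error term: by Leibniz, repeated integration by parts also differentiates the residual $(P_\epsnu-z)^{-1}$, producing factors $(P_\epsnu-z)^{-(k+1)}$. On the far part of $\Gamma_0$ the point $z$ may lie in the essential spectrum of $P$, so the only available uniform bound is the numerical-range estimate $\|(P_\epsnu-z)^{-1}\|_{B(L^2)}\leq\epsnu^{-1}$; the single $\epsnu$ prefactor kills one power but after $N$ integrations by parts one is left with roughly $\epsnu^{1-(N+1)}=\epsnu^{-N}$, which overwhelms the $t^{-N}\sim\epsnu^{N(1/3+\delta_1)}$ gain. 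Iterating the resolvent identity does not cleanly repair this, because $Q\tilde\psi_z(P)$ is not a function of $P$ and each iterate introduces a further order-$2$ loss. This is precisely the obstruction the paper avoids by routing those contributions into $b_\epsnu$ (where only uniform \emph{boundedness}, not decay at the $\epsnu^{-1/3-}$ scale with $\|f\|$, is needed), and then obtaining boundedness via the strong limit $(P_\epsnu-\lambda)^{-1}\varphi(P)\to(\one-\pi_\lambda)(P-\lambda)^{-1}\varphi(P)$ from Lemma~\ref{lem:bidon} together with integration by parts on the $\epsnu$-free resolvent. A smaller point: for $\tilde\psi_z$ to be well defined on the far part you need the supports arranged so that $\supp\psi\Subset\chi^{-1}(1)$; make that explicit. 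To fix the argument, adopt the paper's decomposition, or equivalently move the $\Gamma_\pm$ and far-$\Gamma_0$ contributions of your $e_\epsnu$ into $b_\epsnu$.
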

\proof
We use the integral representation \eqref{eq:srs} of the semigroup, namely,
\beq
e^{-isP_\epsnu}f=\textcolor{blue}{-}\frac{1}{2\pi i}\int_\Gamma (P_\epsnu-z)^{-1}e^{-izs} f \mathrm{d}z.\label{eq:int to estime}
\eeq
Next, we split the integral over $\Gamma$ into the sum of three integrals over $\Gamma_0$, $\Gamma_+$ and $\Gamma_-$ (where the different $\Gamma_\#$ are defined in \eqref{eq:Gamma}) which we denote respectively by $I_0(s)$, $I_+(s)$ and $I_-(s)$.  We can assume without loss of generality that there are no  isolated eigenvalues of $P$ which are not accumulation points of $\bigcup_{\nu>0}\sp_{\rm pp}(P_\nu)$, otherwise we  can slightly deform  $\Gamma_0$ to bypass these eigenvalues.

Let $\chi\in \cf_{\rm c}(\rr;[0,1])$ be such that $\chi\equiv 1$ in a neighborhood of $0$ and $\chi\equiv 0$ on $\rr\setminus\open{-\delta,\delta}$. Let $\varphi\in \cf_{\rm c}(\rr;[0,1])$ has the same properties and in addition $\supp\varphi\Subset{\chi^{-1}(1)}$.  We further split the $I_0(s)$ integral into two terms: 
$$
\bea
I_0(s)=:  I_{0,\chi}(s) + I_{0,1-\chi}(s), \quad   I_{0,\chi}(s) :=  \textcolor{blue}{-}\frac{1}{2\pi i}  \int_{\Gamma_0} \chi(\lambda)  (P_\epsnu-\lambda)^{-1}e^{-i\lambda s} \mathrm{d}\lambda.
\eea
$$
For the decomposition \eqref{eq:decomp} of $u_\epsnu(t)$ we take 
\beq\bea\label{eqdefcom}
b_\epsnu(t)&:=e^{-it P_\epsnu}P_\epsnu^{-1}(\one-\varphi(P))f+{i}\int_t^\infty (I_++I_- + I_{0,1-\chi})(s)\varphi(P)f {\rm d} s, \\ e_\epsnu(t)&:= {i}\int_t^\infty I_{0,\chi}(s)\varphi(P)f {\rm d} s.
\eea\eeq
We will show that the integrals converge, in which case for every $\epsnu >0$ we have
$$
{i}\int_t^\infty (I_++I_- + I_0)(s) {\rm d}s = {i}\int_t^\infty e^{-i t P_\epsnu} {\rm d}s = P_{\epsnu}^{-1}e^{-i s P_\epsnu}.
$$
Thus, $b_\epsnu(t)+e_\epsnu(t)= e^{-it P_\epsnu}P_\epsnu^{-1} f$, so $u_\epsnu(t)=u_{\epsnu,\infty}+ b_\epsnu(t)+e_\epsnu(t)$ by \eqref{eq:ut} indeed.

We start by estimating the first summand in  formula \eqref{eqdefcom} for $b_\epsnu(t)$. We write
$$
\bea
\| e^{-it P_\epsnu}P_\epsnu^{-1}(\one-\varphi(P)) f\| &\leq   \| P_\epsnu^{-1}(\one-\varphi(P)) f\|  \\
& \leq  C \| P_\epsnu^{-1}(\one-\varphi(P)) f\|_1  \\ 
&\leq C\|f\|_{1},
\eea
$$
where by Lemma \ref{lem:bidon}, $\| P_\epsnu^{-1}(1-\varphi(P))\|_{B(H^1)}$ is uniformly bounded because of $P_\epsnu^{-1}(1-\varphi(P))$ strongly converging in $H^1(M)$ and by the uniform boundedness principle. 

Next, the terms in \eqref{eqdefcom}  involving $I_\pm(s) f$ are easily bounded by $\| f\|$ since on the contour {\eqref{eq:estimeresolvante} holds} and the $e^{-i\lambda s}$ factor gives exponential decay along the contour. To estimate the term involving $I_{0,1-\chi}(s)$ we study the limit  $\epsnu\to 0^+$  and integrate by parts  
\beq \label{eq:ipp}
\bea
& \int_{\Gamma_0} (1-\chi)(\lambda)  (P_\epsnu-\lambda)^{-1} \varphi(P)  e^{-i\lambda s}  f \mathrm{d}\lambda  \\
& \to \int_{\Gamma_0} (1-\chi)(\lambda)  (1-\pi_\lambda)(P-\lambda)^{-1} \varphi(P)  e^{-i\lambda s}  f \mathrm{d}\lambda  \\
&=  \int_{\Gamma_0} (1-\chi)(\lambda)  (P-\lambda)^{-1} \varphi(P)  e^{-i\lambda s}  f \mathrm{d}\lambda  \\
&= s^{-2}\int_{\Gamma_0} \frac{d^2}{d\lambda^2}\left((1-\chi)(\lambda)  (P-\lambda)^{-1}\right) \varphi(P)  e^{-i\lambda s}  f \mathrm{d}\lambda.
\eea 
\eeq
Above, the convergence as $\epsnu\to 0^+$ comes from the fact that for  $\lambda\in \supp({1-\chi})\cap \Gamma_0$ we can use Lemma \ref{lem:bidon},  and then to go from the second line to the third we notice that as a function of $\lambda$, $\pi_\lambda$ is supported on a set of Lebesgue measure $0$ since $H^1(M)$ eigenvalues of $P$ are a countable set. By integrating  the resulting estimate in $s$ we get
$$
\norm{\int_t^\infty I_{0,\chi}(s)\varphi(P)f {\rm d} s}_1\leq C \| f\|_1.
$$
In conclusion, the estimates obtained so far give $\| b_\epsnu(t)\|\leq C \| f\|_1$.

We now estimate $e_\epsnu(t)$, which is obtained by integrating in $s$ the expression
$$
 I_{0,\chi}(s)\varphi(P)f = - \frac{1}{2\pi i}  \int_{\Gamma_0} \chi(\lambda)  (P_\epsnu-\lambda)^{-1}e^{-i\lambda s} \varphi(P)f \mathrm{d}\lambda.
$$
 Since $0\notin \sp_{\rm pp}(P)$ we can apply Proposition \ref{prop:tes} which says that that for all $f\in {C}^\infty(M)$,
\begin{equation}\label{eq:H1/2 norm}
	\|(P_\epsnu-\lambda)^{-1}f\|_{{-1/2-}}\leq C \epsnu^{-1/6}\|f\|
\end{equation}
and 
\begin{equation}\label{eq:H1/2 norm2}
	\|(P_\epsnu-\lambda)^{-1}f\|_{{-1/2-}}\leq C \epsnu^{-1/3}\|f\|_{-1/2-}
\end{equation}
uniformly in  $\lambda\in \supp\chi\subset [-\delta,\delta]$.  By integrating by parts $n\geq 1$ times we obtain 
$$
I_{0,\chi}(s)\varphi(P)f =  -\frac{1}{2\pi i}  i^n s^{-n} \int_{\Gamma_0} \frac{d^n}{d\lambda^n}\left( \chi(\lambda)   (P_\epsnu-\lambda)^{-1}\right)e^{-i\lambda s} \varphi(P)f \mathrm{d}\lambda.
$$
To bound  the $H^{-1/2-}(M)$ norm of  $I_0(s) f$ we use the Leibniz rule and then   estimate $(P_\epsnu-\lambda)^{-k} f$ for $k\leq n$ and $\lambda\in \supp\chi$. To that end we use \eqref{eq:H1/2 norm} once and  \eqref{eq:H1/2 norm2} at most $n-1$ times. This gives
\beq\label{ide3}
\norm{ I_{0,\chi}(s)\varphi(P) f}_{-1/2-}\leq C\|f\|  s^{-n}\epsnu^{{1/6-n/3}}.
\eeq
Integrating the estimate \eqref{ide3} yields
\beq\label{eq:eest}
\| e_\epsnu(t)\|_{-1/2-}\leq   C t^{-n+1}\epsnu^{{1/6-n/3}} \|f\| .
\eeq
Since $n$ can be taken arbitrarily large we conclude the bound on  $\|e_{\epsnu}(t)\|_{-1/2-}$.
\qed

\medskip

\begin{proposition}\label{prop:main} With the same assumptions and notation as in Theorem \ref{thm:main}, if in addition $f\in \Ran \one_{[-\delta,\delta]}(P)$ for $\delta>0$ small enough, then for each $\alpha>0$ 
$$
\lim_{\nu\rightarrow 0}\sup_{t\in \opens{\nu^{\frac{-1-\alpha}{3}},\infty}}\|u_{\nu}(t)-u_{0,\infty}(t)\|_{-1/2-}=0.
$$	
\end{proposition}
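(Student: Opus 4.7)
The plan is to read the conclusion off the decomposition of Theorem \ref{thm:main}: writing
$$u_\nu(t) - u_{0,\infty} = (u_{\nu,\infty}-u_{0,\infty}) + b_\nu(t) + e_\nu(t)$$
with $u_{0,\infty}:=-(P-i0^+)^{-1}f$, Theorem \ref{thm:main} already gives $\|u_{\nu,\infty}-u_{0,\infty}\|_{-1/2-}\to 0$ as $\nu\to 0^+$ with a bound independent of $t$, so what remains is to show that $\|b_\nu(t)\|_{-1/2-}$ and $\|e_\nu(t)\|_{-1/2-}$ tend to zero uniformly for $t\in\opens{\nu^{-(1+\alpha)/3},\infty}$. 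The spectral-localization hypothesis is used only once, to simplify $b_\nu(t)$: for $\delta$ small enough, the auxiliary cutoff $\varphi$ from the proof of Theorem \ref{thm:main} can be chosen with $\varphi\equiv 1$ on $[-\delta,\delta]$, so $\varphi(P)f=f$ and the first summand $e^{-itP_\nu}P_\nu^{-1}(\one-\varphi(P))f$ of $b_\nu(t)$ in \eqref{eqdefcom} vanishes identically.

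The remaining contour integrals in $b_\nu(t)$ are handled by refining the arguments in the proof of Theorem \ref{thm:main} so as to extract $t$-decay instead of mere uniform boundedness. For $I_\pm$, interchanging the $s$- and $z$-integrations and using $\int_t^\infty e^{-izs}\,ds = e^{-izt}/(iz)$ (valid since $\Im z<0$ on $\Gamma_\pm$), together with the uniform bound $\|(P_\nu-z)^{-1}\|_{B(L^2)}\leq 1/\delta$ from \eqref{eq:estimeresolvante} and the decay $|e^{-izt}|=e^{(\Im z)t}$ along $\Gamma_\pm$, yields $\bigl\|\int_t^\infty I_\pm(s)f\,ds\bigr\|\leq Ct^{-1}\|f\|$. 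For $I_{0,1-\chi}$, the two integrations by parts in $\lambda$ performed in \eqref{eq:ipp} produce the factor $s^{-2}$; uniformity in $\nu$ of the derivatives $\tfrac{d^k}{d\lambda^k}(P_\nu-\lambda)^{-1}\varphi(P)$ (via Lemma \ref{lem:bidon} together with the uniform boundedness principle) allows the estimate to be carried out at positive $\nu$, giving $\bigl\|\int_t^\infty I_{0,1-\chi}(s)f\,ds\bigr\|_1\leq Ct^{-1}\|f\|_1$. Combined, these produce $\|b_\nu(t)\|_{-1/2-}\leq Ct^{-1}\|f\|_1\leq C\nu^{(1+\alpha)/3}\|f\|_1\to 0$, uniformly on the prescribed range of $t$.

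The genuine obstacle is controlling $e_\nu(t)$, and this is where the exponent $(1+\alpha)/3$ with $\alpha>0$ becomes essential. The key tool is the $n$-fold integration by parts estimate \eqref{eq:eest}, namely
$$\|e_\nu(t)\|_{-1/2-}\leq C_n\|f\|\,t^{-n+1}\nu^{1/6-n/3},\qquad n\geq 1.$$
Substituting $t\geq\nu^{-(1+\alpha)/3}$ yields $\|e_\nu(t)\|_{-1/2-}\leq C_n\|f\|\,\nu^{\mu(n,\alpha)}$ with $\mu(n,\alpha)=(n-1)\alpha/3-1/6$. Given any fixed $\alpha>0$, choosing $n>1+1/(2\alpha)$ makes $\mu(n,\alpha)>0$, so the bound tends to zero as $\nu\to 0^+$ uniformly on the claimed range of $t$. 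Combining this with the estimate on $b_\nu$ and the convergence of $u_{\nu,\infty}$ finishes the proof. The whole balancing reflects the heuristic that the $\nu^{-n/3}$ loss in the $n$-th integration by parts is offset by the $t^{-(n-1)}$ gain only once $t$ has crossed the dissipation scale $\nu^{-1/3}$; the slack $\alpha>0$ is exactly what allows $n$ to be chosen large enough to win strictly.
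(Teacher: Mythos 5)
Your proof is correct and follows essentially the same route as the paper: reduce to showing $b_\nu(t)$ and $e_\nu(t)$ vanish in the limit, exploit the spectral localization to kill the $e^{-itP_\nu}P_\nu^{-1}(\one-\varphi(P))f$ term, use exponential decay on $\Gamma_\pm$ and the integration by parts of \eqref{eq:ipp} for the remaining pieces of $b_\nu$, and invoke \eqref{eq:eest} with $n$ chosen large relative to $1/\alpha$ for $e_\nu$. The only difference is that you spell out the quantitative $t^{-1}$ rate for $b_\nu$ and the explicit exponent computation $\mu(n,\alpha)=(n-1)\alpha/3-1/6$, which the paper leaves as ``inspection of the proof.''
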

\begin{proof}
As $\lim_{\nu\rightarrow 0^+}u_{\nu,\infty}=u_{0,\infty}$ in $H^{-1/2-}(M)$, it suffices to prove that $e_\nu(t)$ and $b_\nu(t)$ converge to $0$ in the requested regime. This follows by inspection of the proof of Theorem \ref{thm:main}. More precisely, the  claim for  $e_\nu(t)$ follows from \eqref{eq:eest} therein.  For $\delta$ sufficiently small, $b_\nu(t)$ simplifies to
$$
b_\epsnu(t)={i}\int_t^\infty (I_++I_- + I_{0,1-\chi})(s)\varphi(P)f {\rm d} s.
$$
The terms involving $I_+$ and $I_-$ are easy to handle because their integrants can be bounded exponentially and uniformly with respect to $\nu$ thanks to \eqref{eq:estimeresolvante}. Finally the  $I_{0,1-\chi}$ term is dealt with by noticing that the argument in \eqref{eq:ipp} gives  as much decay as wanted.
\end{proof}

\medskip

\begin{remark} If the assumption  $f\in \Ran \one_{[-\delta,\delta]}(P)$ is dropped then $b_\nu(t)$ involves an extra term $e^{-it P_\epsnu}P_\epsnu^{-1}(\one-\varphi(P))f$ which is not known to decay. With the methods in this paper we could represent it by a contour integral and try to use arguments similar to the way we treat other terms, but getting the desired decay rate would require resolvent estimates only known to hold in neighborhood of $0$ of the spectrum with the present assumptions---away from $0$ we de not make any dynamical assumption so only $\nu^{-1}$ estimates are available.
\end{remark}

\appendix
\section{Preliminaries on pseudo-differential calculus} \label{app:A}

\subsection{Basic estimates}\label{ss:basic} Let us recall the following well-known \emph{elliptic estimate}, see e.g.~\cite[Thm.~E.32]{DZbook}.

\begin{theorem}\label{elliptic} Let $A_1\in\Psi^0(M)$, $A_2\in\Psi^\ell(M)$, $\ell\in\rr$. Assume that $\wf(A_1)\subset \elll(A_2)$. Let $s,N\in\rr$. Then for each $u\in\cD'(M)$, if $A_2 u\in H^{s-\ell}(M)$, then $A_1 u \in H^s(M)$ and
\[
\| A_1 u\|_{s}\leq C ( \| A_2 u \|_{{s-\ell}} + \| u\|_{{-N}}).
\]
\end{theorem}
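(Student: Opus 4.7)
The plan is to reduce this to a microlocal parametrix construction, which is the standard route for elliptic estimates in pseudo-differential calculus.

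First I would construct $B\in\Psi^{-\ell}(M)$ such that
\[
BA_2 = A_1 + R, \qquad R\in\Psi^{-\infty}(M).
\]
The construction is the classical iterative one. Since $\wf'(A_1)\subset\elll(A_2)$, I choose an open conic neighborhood $U$ of $\wf'(A_1)$ with $\overline{U}\subset \elll(A_2)$, together with a cutoff $\chi\in S^0(T^*M)$ with conic support in $\elll(A_2)$ and $\chi\equiv 1$ on $U$. Setting $b_0 = \chi\,\sigma_{\rm pr}(A_1)/\sigma_{\rm pr}(A_2) \in S^{-\ell}(T^*M)$ and $B_0=\Op(b_0)$, the symbol calculus gives $B_0 A_2 - A_1 \in \Psi^{-1}(M)$ with microsupport contained in $\elll(A_2)$. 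One then inductively corrects by operators $B_k\in\Psi^{-\ell-k}(M)$ whose principal symbols absorb the residual term at order $-k$, and finally takes $B$ to be an asymptotic sum $B\sim\sum_{k\geq 0} B_k$. The resulting $B$ satisfies $BA_2 - A_1\in\Psi^{-\infty}(M)$ with microsupport in $\elll(A_2)$.

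Once $B$ and $R$ are in hand, I write
\[
A_1 u = B A_2 u - R u
\]
and estimate the two terms separately using the standard Sobolev mapping properties of pseudo-differential operators: $B\in\Psi^{-\ell}(M)$ acts continuously $H^{s-\ell}(M)\to H^s(M)$, while $R\in\Psi^{-\infty}(M)$ is smoothing and hence maps $H^{-N}(M)\to H^s(M)$ continuously for any $N$. (For $u\in\cD'(M)$ on the compact manifold $M$, one has $u\in H^{-N}(M)$ for some $N$, so the right-hand side is finite.) Combining these gives
\[
\|A_1 u\|_s \leq \|B A_2 u\|_s + \|R u\|_s \leq C\,\|A_2 u\|_{s-\ell} + C\,\|u\|_{-N},
\]
which is exactly the claimed bound.

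I do not expect any serious obstacle: the asymptotic-sum step in the parametrix construction is routine once one has composition and symbol calculus available (as outlined in \cite{DZbook}), and the mapping properties on Sobolev spaces are standard. The only point requiring mild care is to maintain microsupport control so that $B$ genuinely lies in $\Psi^{-\ell}(M)$ with microsupport inside $\elll(A_2)$ throughout the iteration; this is handled by keeping the cutoff $\chi$ fixed at every stage.
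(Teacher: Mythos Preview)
Your parametrix argument is correct and is exactly the standard proof; the paper does not supply its own argument for this statement but simply refers the reader to \cite[Thm.~E.32]{DZbook}, where the same construction appears. The only minor imprecision is the remark that ``$u\in H^{-N}(M)$ for some $N$'': in the theorem $N$ is fixed in advance, so if $u\notin H^{-N}(M)$ the inequality is vacuous, while the qualitative conclusion $A_1 u\in H^s(M)$ still follows from your identity $A_1 u = BA_2 u - Ru$ using any $N'$ with $u\in H^{-N'}(M)$.
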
 

In our context, with $P$ and $Q$ as in Section \ref{s:radial}, another useful version of the elliptic estimate that follows from the same proof is the following statement: if $\wf(A_1)\subset \elll(A_2)$, $\wf(A_1)\cap p^{-1}([-\delta,\delta])=\emptyset$, and if $A_2 (P-\omega+ i \epsnu Q) u \in H^{s-\ell}(M)$, then $A_1 u \in H^s(M)$ and
\beq
\| A_1 u\|_{s}\leq C (\| A_2 (P-\omega- i \epsnu Q) u \|_{{s-\ell}} +  \| u\|_{{-N}})
\eeq
 uniformly in $\epsnu>0$ and $|\omega|\leq \delta$.

The proposition below is a microlocal version of the \emph{sharp G\r{a}rding inequality}, see e.g.~\cite[Prop.~E.23]{DZbook} for the proof.

\begin{proposition}\label{garding} Let $A\in\Psi^{2s}(M)$, $B\in\Psi^0(M)$ $B_1\in\Psi^0(M)$, $s\in\rr$. Suppose 
\[
\sigma_{\pr}(A)\geq 0 \mbox{ on } T^*M\setminus\elll(B),
\]
and $\wf(A)\subset \elll(B_1)$. Then for each $N$ and all $u\in\cf(M)$,
\[\bra Au,u\ket \geq - C( \| B u \|^{2}_{{s}}+  \| B_1 u \|^{2}_{{s-1/2}} + \| u \|^2_{{-N}}).
\]
\end{proposition}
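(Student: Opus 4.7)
The plan is to reduce the microlocalized statement to the classical (global) sharp Gårding inequality, which says that any $C\in\Psi^{2s}(M)$ with $\sigma_{\pr}(C)\geq 0$ on all of $T^*M$ satisfies $\bra Cu,u\ket\geq -C'\|u\|_{s-1/2}^2$ for $u\in\cf(M)$. Compared with the hypotheses here, there are two defects to be absorbed: (i) $\sigma_{\pr}(A)$ is only nonnegative off $\elll(B)$, and (ii) the error on the right-hand side is formulated with the microlocal norm $\|B_1 u\|_{s-1/2}$ rather than the full $\|u\|_{s-1/2}$.

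To address (i), I would introduce the positive correction
$$\tilde A := A + C_0\, Q^{s/2} B^* B\, Q^{s/2}\in\Psi^{2s}(M),$$
with $C_0>0$ to be chosen large. Since $Q>0$ is elliptic of order $2$ its functional calculus yields $Q^{s/2}\in\Psi^s(M)$ with principal symbol $\sigma_{\pr}(Q)^{s/2}$ bounded below by $c\bra\xi\ket^{s}$ off the zero section, and $|\sigma_{\pr}(B)|^2$ is bounded below on $\elll(B)$. Hence on $\elll(B)$ the added symbol dominates $|\sigma_{\pr}(A)|=O(\bra\xi\ket^{2s})$ once $C_0$ is large enough, while off $\elll(B)$ it is simply $\geq 0$ and $\sigma_{\pr}(A)\geq 0$ by hypothesis. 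Thus $\sigma_{\pr}(\tilde A)\geq 0$ globally and classical sharp Gårding gives $\bra\tilde A u,u\ket\geq -C\|u\|_{s-1/2}^2$. Unwinding and writing $BQ^{s/2}=Q^{s/2}B+[B,Q^{s/2}]$ with $[B,Q^{s/2}]\in\Psi^{s-1}(M)$, one obtains
$$\bra Au,u\ket\geq -C\|Bu\|_s^2-C\|u\|_{s-1/2}^2,$$
with the extra $C\|u\|_{s-1/2}^2$ coming from the commutator and from bounding $\|u\|_{s-1}\leq\|u\|_{s-1/2}$.

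To handle (ii), I would exploit $\wf(A)\subset\elll(B_1)$ and pick a self-adjoint $\Psi\in\Psi^0(M)$ with $\wf(\Psi)\subset\elll(B_1)$ such that $\Psi^2$ equals the identity microlocally in a neighborhood of $\wf(A)$, so that $A(\one-\Psi^2)\in\Psi^{-\infty}(M)$. Splitting
$$\bra Au,u\ket=\bra A\Psi u,\Psi u\ket+\bra[A,\Psi]\Psi u,u\ket+\bra A(\one-\Psi^2)u,u\ket,$$
the last summand is $O(\|u\|_{-N}^2)$. The commutator term, of order $2s-1$ with microsupport in $\elll(B_1)$, is $O(\|B_1 u\|_{s-1/2}^2+\|u\|_{-N}^2)$ by Cauchy–Schwarz together with Theorem \ref{elliptic}. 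Applying the estimate from the previous paragraph to $v=\Psi u$ in place of $u$, and controlling $\|\Psi u\|_{s-1/2}$ and $\|B\Psi u\|_s$ by $\|B_1 u\|_{s-1/2}+\|u\|_{-N}$ and $\|Bu\|_s+\|B_1 u\|_{s-1/2}+\|u\|_{-N}$ respectively (using the elliptic estimate and expanding $B\Psi=\Psi B+[B,\Psi]$), I arrive at the stated inequality.

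The main obstacle here is not conceptual but purely bookkeeping: one must ensure that each residual term from commutators and from the microlocal cut-off $\Psi$ really lands in one of $\|Bu\|_s^2$, $\|B_1 u\|_{s-1/2}^2$, or $\|u\|_{-N}^2$, and in particular that the order of $[B,Q^{s/2}]$ and the microsupports of the various cutoffs behave as claimed. All the underlying ingredients—classical sharp Gårding, the elliptic estimate of Theorem \ref{elliptic}, and the positive functional calculus $Q^{s/2}$ made available by ellipticity and positivity of $Q$—are readily at hand.
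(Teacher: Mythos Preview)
The paper does not give its own proof of this proposition; it simply cites \cite[Prop.~E.23]{DZbook}. Your two-step approach --- globalize positivity of the symbol, apply the standard sharp G\r{a}rding inequality, then microlocalize via a cutoff $\Psi$ --- is exactly the intended strategy, and Step (ii) is carried out correctly.

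There is, however, a genuine gap in Step (i). You assert that for $C_0$ large the principal symbol of $\tilde A = A + C_0\,Q^{s/2}B^*BQ^{s/2}$ is nonnegative everywhere. On $\elll(B)$ this would require $C_0|\sigma_{\pr}(B)|^2\sigma_{\pr}(Q)^s$ to dominate the negative part of $\sigma_{\pr}(A)$, but $|\sigma_{\pr}(B)|$ is bounded below only on \emph{compact} subsets of $\elll(B)$ in the cosphere bundle; it may vanish at $\partial\elll(B)$, where the hypothesis does not force $\sigma_{\pr}(A)\geq 0$ (only $\geq 0$ on the closed complement). For instance with $\sigma_{\pr}(B)=\xi_2^2/|\xi|^2$ and $\sigma_{\pr}(A)=-\xi_2^2|\xi|^{2s-2}$ one gets
\[
\sigma_{\pr}(\tilde A)=\Big(-\tfrac{\xi_2^2}{|\xi|^2}+C_0\tfrac{\xi_2^4}{|\xi|^4}\Big)|\xi|^{2s},
\]
which is negative whenever $0<|\xi_2|/|\xi|<C_0^{-1/2}$, for any $C_0$.

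The standard remedy is to insert an intermediate cutoff rather than using $B$ itself. In every application in the paper (and in the cited reference) one in fact has $\sigma_{\pr}(A)\geq 0$ on an open \emph{neighbourhood} $U$ of $T^*M\setminus\elll(B)$. Under that slightly stronger reading, either your correction already works (since $T^*M\setminus U$ is a compact subset of $\elll(B)$ on which $|\sigma_{\pr}(B)|$ is bounded below), or more cleanly one picks $\psi\in S^0$ with $0\leq\psi\leq 1$, $\psi\equiv 1$ on $T^*M\setminus U$, $\supp\psi\subset\elll(B)$, and uses $C_0\,\Op(\psi q^{s/2})^*\Op(\psi q^{s/2})$ as the correction. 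The corrected symbol is then genuinely $\geq 0$, and since $\wf'(\Op(\psi q^{s/2}))\subset\elll(B)$, Theorem~\ref{elliptic} bounds $\|\Op(\psi q^{s/2})u\|$ by $C\|Bu\|_s+C\|u\|_{-N}$. Your Step (ii) then goes through unchanged.
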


\subsection{Functions of pseudo-differential operators} The next proposition allows to compute the principal symbol of functions of pseudo-differential operators, defined using the functional calculus for self-adjoint operators.  

\begin{proposition}\label{prop:fc}  Let $m\geq 0$. Assume $A\in \Psi^{m}(M)$ is elliptic and self-adjoint in the sense of operators on $L^2(M)$. Let $g\in S^{p}(\rr)$, $p\in \rr$. Then $g(A)\in\Psi^{mp}(M)$ and $S^p(\rr)\ni g\mapsto g(A)\in \Psi^m(M)$ is continuous. Moreover, if $g$ is elliptic in $S^{p}(\rr)$  then
\beq 
\sigma_{\pr}( g(A) )= g_{\pr}(\sigma_{\pr}(A)),
\eeq
where $g_{\pr}$ is the principal symbol of $g$. 
\end{proposition}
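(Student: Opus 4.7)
The plan is to use the Helffer--Sj\"ostrand functional calculus. First I would construct an almost analytic extension $\tilde g\in C^\infty(\cc)$ of $g$, i.e.~$\tilde g\tra{\rr}=g$ and
\beq
\module{\bar\p \tilde g(z)}\leq C_N \braket{\Re z}^{p-1-N}\module{\Im z}^N
\eeq
for all $N\geq 0$, with $\supp \tilde g \subset \{\module{\Im z}\leq C\braket{\Re z}\}$. This is the standard construction via truncated Taylor expansion in $\Im z$, and the seminorms of $\tilde g$ are controlled by those of $g$ in $S^p(\rr)$. Since $A$ is self-adjoint with $\sp(A)\subset\rr$ (discrete if $m>0$, bounded if $m=0$), the Helffer--Sj\"ostrand formula
\beq\label{eq:HS}
g(A) = -\frac{1}{\pi}\int_\cc \bar\p \tilde g(z)\, (A-z)^{-1}\, dL(z)
\eeq
holds, with $dL$ the Lebesgue measure on $\cc$; convergence in the operator-norm sense is ensured for $N$ large enough in the almost analytic extension, and agreement with the spectral-theorem definition of $g(A)$ follows by testing against a total family of eigenfunctions of $A$.

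The main analytic step is a symbol-class parametrix for $(A-z)^{-1}$ with polynomial dependence on $\module{\Im z}^{-1}$. Writing $a\defeq \sigma_\pr(A)$, the symbol $a-z$ is elliptic of order $m$ in the region $\{\module{a(x,\xi)}\geq 2\module{z}\}$, where it admits an inverse $b_0(x,\xi;z)=(a-z)^{-1}\in S^{-m}$ losing a factor $\module{\Im z}^{-1}$ per derivative. Iterating the standard asymptotic expansion yields $B_z\in\Psi^{-m}(M)$ with $(A-z)B_z=\one+R_z$, $R_z\in\Psi^{-\infty}(M)$, both with symbol seminorms polynomially bounded in $\module{\Im z}^{-1}\braket{\Re z}$. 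Combining with the a priori $L^2$ bound $\norm{(A-z)^{-1}}\leq \module{\Im z}^{-1}$ via the identity $(A-z)^{-1}=B_z-(A-z)^{-1}R_z$ and iterating yields the required symbol-class estimates for the full resolvent symbol.

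Plugging these into \eqref{eq:HS} and using the rapid vanishing of $\bar\p\tilde g$ in $\module{\Im z}$ (by choosing $N$ large), each symbol seminorm of the integrand becomes integrable against $\bar\p \tilde g$, and the resulting estimates express finitely many symbol seminorms of $g(A)$ in $\Psi^{mp}(M)$ in terms of finitely many seminorms of $g$ in $S^p(\rr)$. This yields $g(A)\in\Psi^{mp}(M)$ together with the continuity statement. For the principal symbol claim, the leading symbol of the parametrix $B_z$ is $(a-z)^{-1}$, so modulo $S^{mp-1}$
\beq
\sigma_\pr(g(A))(x,\xi) = -\frac{1}{\pi}\int_\cc \bar\p \tilde g(z)\,(a(x,\xi)-z)^{-1}\,dL(z) = g(a(x,\xi)),
\eeq
the last equality being the scalar Helffer--Sj\"ostrand identity applied at the real point $a(x,\xi)$. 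Under the ellipticity hypothesis on $g$, $g(a)\in S^{mp}$ is a genuine representative of $g_\pr(\sigma_\pr(A))$.

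The main obstacle is the careful bookkeeping of the $\module{\Im z}^{-1}$ losses in the resolvent parametrix so that the $z$-integral converges absolutely in every symbol seminorm. This is by now entirely standard; I would follow the exposition in Dimassi--Sj\"ostrand or Davies, transposing from the semiclassical to the present homogeneous setting by treating $\braket{\xi}^{m}$ as the large parameter in place of $h^{-m}$.
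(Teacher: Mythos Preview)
Your approach is correct and follows a well-known route. Both you and the paper base the argument on the Helffer--Sj\"ostrand formula; the difference lies in how the pseudodifferential nature of $g(A)$ is established. The paper invokes Beals' criterion (in a compact-manifold variant): membership in $\Psi^{mp}(M)$ is checked by bounding iterated commutators of vector fields with $g(A)$ in operator norm, which via the Helffer--Sj\"ostrand representation reduces to the same commutator bounds for the resolvent $(A-z)^{-1}$, with polynomial growth in $\module{\Im z}^{-1}$. You instead construct a full parametrix symbol for $(A-z)^{-1}$ with tracked $\module{\Im z}^{-1}$ losses and integrate symbol seminorms directly against $\bar\partial\tilde g$. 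The two are closely related---Beals' criterion is often \emph{proved} by exactly such a parametrix---and both are covered by the references the paper cites (Dimassi--Sj\"ostrand for your route, Ruzhansky--Turunen for the compact-manifold Beals criterion). The Beals route is arguably slicker on a closed manifold since it avoids patching local symbol constructions across charts; your direct construction has the advantage of making the principal-symbol identity $\sigma_\pr(g(A))=g(a)$ immediately visible from the leading term $(a-z)^{-1}$ of the parametrix.
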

\proof This follows from well-known arguments, see e.g.~\cite[Thm.~5.4]{Ro}, \cite[Corr.~4.5]{Bo}, \cite[Prop.~4.2]{GW0} for the $\rr^d$ case; cf.~\cite{DS} for the semi-classical case.  The standard proof proceeds by applying Beals' criterion to $g(A)$. By  Helffer--Sjöstrand formula this then reduces to  applying Beals' criterion to the resolvent $(A-\lambda)^{-1}$, which is straightforward using ellipticity. In our setting the only necessary adaptation is the use of a variant of Beals' criterion on compact manifolds, see e.g.~\cite[\S5.3]{Ruzhansky2010}.  The continuity statement follows from the fact that when using Beal's criterion, seminorms in $\Psi^0(M)$ are estimated through norms of iterated commutators of vector fields with $g(A)$. Again, this boils down to controlling iterated commutators of vector fields with the $(A-\lambda)^{-1}$, so the dependence on $g$ only arises through  integration  with an almost analytic extension, which depends continuously on $g\in S^p(\rr)$.     \qed

\medskip

In the case when the order is $m=0$ the ellipticity assumption can be removed.

\begin{proposition}\label{prop:zero} Let $P\in \Psi^0(M)$ and $P^*=P$. Let $f\in S^{p}(\rr)$, $p\in \rr$. Then $f(P)\in\Psi^{0}(M)$. Futhermore, $S^{p}(\rr)\ni f\mapsto f(P)\in \Psi^0(M)$ is continuous.
\end{proposition}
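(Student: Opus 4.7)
The plan is to reduce to the compactly supported case by exploiting boundedness of $P$, and then apply the Helffer--Sjöstrand representation combined with Beals' criterion on $M$, as in the proof of Proposition \ref{prop:fc}. The essential simplification compared to that proposition is that ellipticity is not needed, because the spectrum of a self-adjoint zeroth order operator is automatically compact in $\rr$.

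First, since $P^*=P \in \Psi^0(M) \subset B(L^2(M))$, the spectrum satisfies $\sp(P) \subset [-M_0,M_0]$ with $M_0 \defeq \|P\|_{B(L^2)}$. Fix any $\chi \in \ccf(\rr;[0,1])$ with $\chi\equiv 1$ on a neighborhood of $[-M_0,M_0]$. By functional calculus for bounded self-adjoint operators, $f(P)=(\chi f)(P)$ for every $f\in S^p(\rr)$. Moreover, the map $S^p(\rr)\ni f\mapsto \chi f\in\ccf(\rr)$ is continuous in the natural topologies. Hence it suffices to prove that $g\mapsto g(P)$ is continuous from $\ccf(\rr)$ (with its usual LF topology) into $\Psi^0(M)$.

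Next, for $g\in\ccf(\rr)$ I would use the Helffer--Sjöstrand formula
\[
g(P) = -\frac{1}{\pi} \int_{\cc} \bar\partial \tilde g(z)\, (P-z)^{-1}\, dL(z),
\]
where $\tilde g$ is an almost analytic extension of $g$ with compact support and $|\bar\partial \tilde g(z)|\leq C_N |\Im z|^N$ for any $N$. For $z\in\cc\setminus\rr$ the resolvent satisfies $\|(P-z)^{-1}\|_{B(L^2)}\leq |\Im z|^{-1}$. The key claim is then that $(P-z)^{-1}\in\Psi^0(M)$ with quantitative seminorm bounds of the form
\[
\|(P-z)^{-1}\|_{\Psi^0,k}\leq C_k |\Im z|^{-N_k}, \qquad z\in\cc\setminus\rr,
\]
for some $N_k\in\nn$. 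To establish this I would invoke Beals' criterion on a closed manifold (see \cite[\S5.3]{Ruzhansky2010}), which characterizes $\Psi^0(M)$ by boundedness on $L^2(M)$ of all iterated commutators with vector fields (or, more precisely, with a generating family of first order operators). A standard induction using
\[
[V,(P-z)^{-1}] = -(P-z)^{-1}[V,P](P-z)^{-1}, \qquad V\in\Diff^1(M),
\]
and the fact that $[V,P],\,[V_1,[V_2,P]],\ldots$ all lie in $\Psi^0(M)$, yields the required bounds with $N_k=k+1$.

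Inserting these estimates into Helffer--Sjöstrand and choosing $N$ large enough in the decay of $\bar\partial \tilde g$, the integral converges in every Beals seminorm, giving $g(P)\in\Psi^0(M)$ together with continuous dependence on $g$: the almost analytic extension $\tilde g$ depends continuously on $g\in\ccf(\rr)$, so the same is true of the integral. Composing with $f\mapsto \chi f$ from step one gives the stated continuity of $f\mapsto f(P)$ on $S^p(\rr)$. The main obstacle I anticipate is simply the careful bookkeeping of Beals' seminorms on a manifold (which forces working with a finite generating family of vector fields and tracking derivatives through the almost analytic extension); no new microlocal input beyond what is already invoked in the proof of Proposition \ref{prop:fc} should be needed.
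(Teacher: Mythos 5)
Your proof is correct, but it takes a genuinely different route from the paper's. The paper observes that, because the principal symbol $p$ of $P$ is bounded, $P-\omega$ is an \emph{elliptic} zeroth-order self-adjoint operator for $\omega$ large enough, and then simply applies Proposition~\ref{prop:fc} to $A=P-\omega$ and $g(\lambda)=f(\lambda+\omega)$, noting that $g(A)=f(P)$; continuity of $f\mapsto f(P)$ follows since translation is continuous on $S^p(\rr)$ and Proposition~\ref{prop:fc} is stated with continuous dependence. Your approach avoids reinstating ellipticity: you use compactness of $\sp(P)$ to replace $f$ by $\chi f\in\ccf(\rr)$, and then re-run the Helffer--Sj\"ostrand/Beals argument (essentially the proof underlying Proposition~\ref{prop:fc}) for compactly supported $g$, where the resolvent $(P-z)^{-1}$ exists for $\Im z\neq 0$ regardless of ellipticity and the iterated commutator bounds $\| \operatorname{ad}_{V_1}\cdots\operatorname{ad}_{V_k}(P-z)^{-1}\|_{B(L^2)}\lesssim |\Im z|^{-(k+1)}$ follow since the nested commutators of $P$ with first-order operators stay in $\Psi^0(M)$ or below. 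What the paper's trick buys is brevity---it leans on Proposition~\ref{prop:fc} as a black box; what your route buys is conceptual clarity about why ellipticity is dispensable for $m=0$: the spectrum is already compact, so no control of the resolvent at large $\lambda$ is needed. Two minor caveats: the manifold version of Beals' criterion typically tests against both vector fields and multiplication operators, but since $[\phi,P]\in\Psi^{-1}(M)$ is also bounded on $L^2(M)$ the same induction applies; and one should note explicitly (as you implicitly do) that the image of $f\mapsto\chi f$ lies in the fixed Fr\'echet space $C^\infty_{\supp\chi}(\rr)$, which is what makes the continuity statement and the construction of a continuously $g$-dependent almost analytic extension unproblematic.
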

\proof Since $p$ is bounded, the operator $P-\omega$ is elliptic for sufficiently large $\omega\geq 0$. We let $g(\lambda)=f(\lambda+\omega)$ and apply Proposition \ref{prop:fc} to $g(P-\omega)=f(P)$.  
\qed

{\small
\subsubsection*{Acknowledgments} 
The authors would like to thank particularly Yves Colin de Verdière and Jian Wang   for  stimulating discussions and helpful suggestions. Support from the grant ANR-20-CE40-0018 of the Agence Nationale de la Recherche is gratefully acknowledged.  N.F.~gratefully acknowledges support from the Région Pays de la Loire via the Connect Talent Project HiFrAn 2022, 07750.    \medskip }

 \bibliographystyle{abbrv}
 \bibliography{viscosity}

\end{document}